\numberwithin{equation}{section}
\def\RR{{\mathbb R}}
\def\eps{\varepsilon}
\def\then{\Rightarrow}
\def\cupp{\mathop{\cup}}
\def\ell{l}
\def\mint{{{\bf-}\!\!\!\!\!\!\hspace{-.1em}\int}}
\def\liminf{\mathop{\underline{\lim}}}
\def\exp{{\rm e}}
\def\endproof{$\blacksquare$}
\newtheorem{theorem}{Theorem}[section]
\newtheorem{lemma}[theorem]{Lemma}
\newtheorem{corollary}[theorem]{Corollary}
\theoremstyle{definition}
\newtheorem{definition}[theorem]{Definition}
\theoremstyle{remark}
\newtheorem{remark}[theorem]{Remark}
\title[Regularity of solutions of one dimensional variational obstacle problems]{On the regularity of solutions of one dimensional variational obstacle problems}
\author{Jean-Philippe Mandallena}
\address{{\rm (Jean-Philippe Mandallena)} UNIVERSITE DE NIMES, Laboratoire MIPA, Site des Carmes, Place Gabriel P\'eri, 30021 N\^\i mes, France.}
\email{jean-philippe.mandallena@unimes.fr}
\keywords{One dimensional variational obstacle problems, Tonelli's partial regularity, Sobolev solutions}
\begin{document}

\maketitle

\begin{abstract}
We study the regularity of solutions of one dimensional variational obstacle problems in $W^{1,1}$ when the Lagrangian is locally H\"older continuous and globally elliptic. In the spirit of the work of Sychev (\cite{sychev89,sychev91,sychev92}), a direct method is presented for investigating such regularity problems with obstacles. This consists of introducing a general subclass $\mathcal{L}$ of $W^{1,1}$, related in a certain way to one dimensional variational obstacle problems, such that every function of $\mathcal{L}$ has Tonelli's partial regularity, and then to prove that, depending on the regularity of the obstacles, solutions of corresponding variational problems belong to $\mathcal{L}$. As an application of this direct method, we prove that if the obstacles are $C^{1,\sigma}$ then every Sobolev solution has Tonelli's partial regularity.
\end{abstract}

\tableofcontents

\newpage

\section{Introduction}

In this paper we consider one dimensional variational obstacle problems of type
\begin{equation}\label{OVP}
\inf\left\{\mathcal{J}_{L}(u;[a,b]):=\int_a^bL(x,u(x),u^\prime(x))dx: u\in \mathcal{A}_{f,g}\right\}, 
\end{equation}
where $L\in C([a,b]\times\RR\times\RR)$ and $\mathcal{A}_{f,g}\subset W^{1,1}([a,b])$ is given by
$$
\mathcal{A}_{f,g}:=\Big\{u\in W^{1,1}([a,b]):u(a)=A,\ u(b)=B\hbox{ and } f\leq u\leq g\Big\}
$$
with $a,b,A,B\in\RR$ with $a<b$ and $f,g\in W^{1,1}([a,b])$ with $f<g$. Usually, the functions $f$ and $g$ are called obstacles.  The object of the paper is to study the regularity of solutions of one dimensional variational obstacle problems of type \eqref{OVP} when the Lagrangian $L$ is locally H\"older continuous and globally elliptic, see the conditions (H$_1$) and (H$_2$) in Section 2. For this, in the spirit of the work of Sychev (\cite{sychev89,sychev91,sychev92}), we develop a direct method which consists of introducing a general subclass  $\mathcal{L}_\omega(L,K,c,\delta_0)$ of $W^{1,1}([a,b])$, see Definition \ref{Def-of-the-subclass-mathcal{L}}, related to \eqref{OVP}, such that under suitable conditions, see (H$_1$), (H$_2$) and especially (H$^{K,\delta_0}_{\omega})$ in \S2.1, every $u\in \mathcal{L}_\omega(L,K,c,\delta_0)$ has Tonelli's partial regularity, see Definition \ref{TPR-Def} and Theorem \ref{General-Regularity-Theorem} which is the central result of the paper. Then, we prove that if the obstacles $f$ and $g$ are in $C^1([a,b])$ then solutions of  \eqref{OVP} belong to $\mathcal{L}_\omega(L,K,c,\delta_0)$, see Lemma \ref{Prop-Appli1}. From Theorem \ref{General-Regularity-Theorem} and Lemma \ref{Prop-Appli1} we then deduce a regularity result (see Theorem \ref{Main-Appli-Theorem}) for solutions of \eqref{OVP} which says that if $L$ is locally H\"older continuous and globally elliptic and if 
\begin{equation}\label{Cond-for-Reg-Sol}
\lim_{\eps\to 0}\int_0^{\exp\eps}\left[\omega_h\left(\xi+\gamma\sqrt{\xi}\right)\right]^{\theta}{d\xi\over\xi}=0\hbox{ for all }h\in\{f,f^\prime,g,g^\prime\}\hbox{ and all } \gamma,\theta>0,
\end{equation}
where $\omega_h:[0,\infty[\to[0,\infty[$ denotes the modulus of continuity of $h$ and $\exp$ is Napier's number, then every solution of  \eqref{OVP} has Tonelli's partial regularity. In particular,  \eqref{Cond-for-Reg-Sol} holds when the obstacles $f$ and $g$ belong to $C^{1,\sigma}([a,b])$, see Corollary \ref{Main-Coro-Appli-2}.

\medskip

The regularity of solutions of one dimensional variational obstacle problems of type \eqref{OVP} was studied by Sychev in \cite{sychev11}  where it is established, for $L$ locally H\"older continuous and locally elliptic,  that when the obstacles are bounded in $W^{1,\infty}$-norm, solutions exist in the class of Lipschitz functions provided that the obstacles are close, and that if furthermore the obstacles are $C^1$ (resp. $C^{1,\sigma}$) then these Lipschitz solutions are $C^1$ (resp. $C^{1,\sigma}$), see \cite[Theorem 1.1]{sychev11} (resp. \cite[Theorem 1.2]{sychev11}).  But nothing is proved for Sobolev solutions. The results of our paper are contributions in this direction.

\medskip

To complete the introduction, let us mention that Gratwick and Preiss proved in \cite{gratwick-preiss11}, without considering obstacles, that if locally H\"older continuity of $L$ fails, i.e., $L$ is only continuous, then Tonelli's partial regularity does not hold in general. Let us also note that regularity and nonregularity phenomena, depending on H\"older and usual continuity, was known in the context of parametric problems long ago, see Reshetnyak's book \cite[Chapter 6]{Reshetnyak-book82}. We also refer the reader to \cite{GST16} where regularity theory without considering obstacles is developed with singular ellipticity.

\medskip

The plan of the paper is as follows. In the next section we give our main results. The central result of the paper (see Theorem \ref{General-Regularity-Theorem}) is stated  in \S 2.1 and its consequences (see Theorem \ref{Main-Appli-Theorem} and Corollary \ref{Main-Coro-Appli-2}) together with their proofs are in \S2.2.  Finally, Theorem \ref{General-Regularity-Theorem} is proved in Section 4. The proof of Theorem \ref{General-Regularity-Theorem} is based upon a technical lemma (see Lemma \ref{Main-Lemma}) which is a generalization of \cite[Lemma 1.1]{sychev92} to the case of obstacle problems. This technical lemma is proved in Section 3.

\section{Main results}

Let $a,b\in\RR$ with $a<b$ and let  $L\in C([a,b]\times\RR\times\RR)$. In what follows, we consider the following two assumptions:
\begin{enumerate}
\item[(H$_1$)] $L$ is locally H\"older continuous, i.e., for each compact $G\subset [a,b]\times\RR\times\RR$ there exist $C=C(G)>0$ and $\alpha=\alpha(G)>0$ such that 
$$
\left|L(x_1,u_1,v_1)-L(x_2,u_2,v_2)\right|\leq C\left(|x_1-x_2|+|u_1-u_2|+|v_1-v_2|\right)^\alpha
$$
for all $(x_1,u_1,v_1), (x_2,u_2,v_2)\in G$;

\item[(H$_2$)]  $L_{vv}\in C([a,b]\times\RR\times\RR)$ and there exists $\mu>0$ such that $L_{vv}\geq\mu$ everywhere. \\
(Then, we have $L_v\in C([a,b]\times\RR\times\RR)$ and 
$$
L(x,u,v_2)-L(x,u,v_1)-L_v(x,u,v_1)(v_2-v_1)\geq {\mu\over 2}(v_2-v_1)^2
$$ 
for all $(x,u,v_1), (x,u,v_2)\in [a,b]\times\RR\times\RR$.)
\end{enumerate}

\medskip

We begin with a general regularity theorem (see Theorem \ref{General-Regularity-Theorem}) with respect to a subclass of $W^{1,1}([a,b])$ related in a certain way to the one dimensional variational obstacle problem \eqref{OVP}, see Definition \ref{Def-of-the-subclass-mathcal{L}}.

\subsection{A general regularity theorem}  For each $u\in W^{1,1}([a,b])$ and each $s,t\in[a,b]$ with $s<t$, we set
$$
k_u(s,t)={u(s)-u(t)\over s-t}
$$
and we define $u_{s,t}\in W^{1,1}([a,b])$ by
\begin{equation}\label{Def-Of-u-s-t}
u_{s,t}(x):=\left\{
\begin{array}{ll}
u(s)+k_u(s,t)(x-s)&\hbox{ if }x\in]s,t[\\
u(x)&\hbox{ if }x\in[a,b]\setminus]s,t[.
\end{array}
\right.
\end{equation}
Then, for every $s,t\in[a,b]$ with $s< t$, one has  
\begin{equation}\label{der-u(s,t)=k-u}
u^\prime_{s,t}(x)=k_u(s,t)\hbox{ for all }x\in]s,t[.
\end{equation}

\begin{definition}\label{Def-of-the-subclass-mathcal{L}}
Let  $K$ be a compact subset of $[a,b]\times \RR$,  let $\omega:[0,\infty[\times[0,\infty[\to[0,\infty[$ be an increasing function in both arguments such that $\omega(k,0)=0$ for all $k\in[0,\infty[$. Given $c>0$ and $\delta_0>0$, we denote by $\mathcal{L}_{\omega}(L,K,c,\delta_0)$ the class of $u\in W^{1,1}([a,b])$ with the following three properties:
\begin{enumerate}
\item[(A$_1$)] $\displaystyle\big\{(x,u(x)):x\in[a,b]\big\}\subset K$; 
\item[(A$_2$)]  $\displaystyle\mathcal{J}_L(u;[a,b]):=\int_a^b L(x,u(x),u^\prime(x))dx\leq c$; 
\item[(A$_3$)] for every $s,t\in[a,b]$ with $s<t$, if $|s-t|\leq\delta_0$ then 
$$
\mathcal{J}_L(u;[a,b])\leq \mathcal{J}_L(u_{s,t};[a,b])+\omega\left(|k_u(s,t)|,|s-t|\right)|s-t|.
$$
\end{enumerate}
\end{definition}

\begin{remark}
When $\omega\equiv0$, the class $\mathcal{L}_\omega(L,K,c,\delta_0)$ is the one introduced by Sychev in \cite{sychev92} for studying the regularity of solutions of variational problems without obstacles. Thus, in Definition \ref{Def-of-the-subclass-mathcal{L}}, the appearance of the function $\omega$ is related to the obstacles (see Lemma \ref{Prop-Appli1}). 
\end{remark}

\begin{remark}\label{Preliminary-Remark}
If $L$ satisfies (H$_2$) then there exists $N>0$ such that for every $u\in W^{1,1}([a,b])$ satisfying (A$_1$) and (A$_2$) one has 
\begin{equation}\label{Remark1-EqUATiON-1}
\|u^\prime\|_{L^2([a,b])}\leq N.
\end{equation}
 Indeed, there exists $c_1>0$ such that for every $u\in W^{1,1}([a,b])$ satisfying (A$_1$), $|u(x)|\leq c_1$ for all $x\in[a,b]$. As $L$ satisfies (H$_2$) there exists $M>0$ such that $L(x,u,v)\geq {\mu\over 4}v^2$ for all $x\in[a,b]$, all $|u|\leq c_1$ and all $|v|\geq M$, where $\mu>0$ is given by (H$_2$). Fix any $u\in W^{1,1}([a,b])$ satisfying (A$_1$) and (A$_2$). Then 
$
\int_{|u^\prime|\geq M}|u^\prime(x)|^2dx\leq {4\over\mu}c,
$
where $c>0$ is given by (A$_2$), and so $\|u^\prime\|^2_{L^2([a,b])}\leq {4\over\mu}c+M^2(b-a)$ and \eqref{Remark1-EqUATiON-1} follows with $N=\sqrt{{4\over\mu}c+M^2(b-a)}$.
\end{remark}

In what follows, given $c>0$ we consider $\Delta_c\subset]0,1[$ given by 
$$
\Delta_c:=\Big\{\delta_0>0:\delta_0+N\sqrt{\delta_0}\leq 1\Big\}
$$
with $N>0$ given by Remark \ref{Preliminary-Remark}. Furthermore, when (H$_1$) and (H$_2$) hold, we introduce the following assumption:
\begin{enumerate}[leftmargin=14mm]
\item[(H$^{K,\delta_0}_{\omega}$)] $\displaystyle\lim\limits_{\eps\to 0}\int_0^{\exp\eps}\overline{\omega}(k,\xi){d\xi\over\xi}=0$ for all $k\in[0,\infty[$, where  $\overline{\omega}:[0,\infty[\times[0,\infty[\to[0,\infty[$ is given by
\begin{equation}\label{def-of-omega-alpha}
\overline{\omega}(k,\eps):=\left[\sqrt{\widehat{\omega}(k,\eps)}\right]^{\alpha(k)}+\sqrt{\widehat{\omega}(k,\eps)}+\widehat{\omega}(k,\eps)
\end{equation}
with $\widehat{\omega}(k,\eps):=\sqrt{\omega\left(k,\eps+N\sqrt{\eps}\right)}$ and, for each $k\in[0,\infty[$, $\alpha(k)>0$ is the H\"older exponent of $L$, given by (H$_1$), with respect to the compact set $G=K_0\times[-(k+M(k)),k+M(k)]$, where  
\begin{equation}\label{Def-Of-the-compact-K0}
K_0:=\Big\{(t,w)\in[a,b]\times \RR:{\rm dist}\big((t,w);K\big)\leq\delta_0+N\sqrt{\delta_0}\Big\}
\end{equation}
with ${\rm dist}\big((t,w);K\big):=\inf\big\{|x-t|+|u-w|:(x,u)\in K\big\}$ and $M(k)>0$ satisfies the following property:
\begin{equation}\label{Property-of-Constant-M(k)}
|\zeta|\geq M(k)\then{\mu\over 4}\zeta^2-2c(k)\big(|\zeta|+1\big)\geq \omega\big(k,\delta_0+N\sqrt{\delta_0}\big)
\end{equation}
with $\mu>0$ given by (H$_2$) and
\begin{equation}\label{Def-of-Constant-c(k)}
c(k):=\max\left\{\sup\limits_{(t,w,p)\in K_0\times[-k,k]}|L(t,w,p)|,\sup\limits_{(t,w,p)\in K_0\times[-k,k]}|L_v(t,w,p)|\right\}.
\end{equation}
\end{enumerate}

\medskip

Before stating our main result, see Theorem \ref{General-Regularity-Theorem} below, recall that every $u\in W^{1,1}([a,b])$ is uniformly continuous on $[a,b]$ and almost everywhere differentiable in $[a,b]$, i.e., 
$$
\big|[a,b]\setminus \Omega_u\big|=0\hbox{ where }\Omega_u:=\Big\{x\in[a,b]:u\hbox{ is differentiable at }x\Big\}.
$$
(Note also that $W^{1,1}([a,b])=AC([a,b])$ where $AC([a,b])$ is the class of absolutely continuous functions on $[a,b]$, see \cite[Chapter 2]{buttazzo-Giaquinta-hildebrandt98} and the references therein.)
\begin{definition}\label{TPR-Def}
We say that $u\in W^{1,1}([a,b])$ has {\em Tonelli's partial regularity} if the following three conditions hold:
\begin{enumerate}
\item[$\bullet$] $\Omega_u$ is an open subset of $[a,b]$;
\item[$\bullet$] $[a,b]\setminus\Omega_u=\Big\{x\in[a,b]:u^\prime(x)=-\infty\hbox{ or }u^\prime(x)=\infty\Big\}$;
\item[$\bullet$] $u^\prime\in C\big([a,b];[-\infty,\infty]\big)$.
\end{enumerate} 
\end{definition}
We denote the class of $u\in W^{1,1}([a,b])$ such that $u$ has Tonelli's partial regularity by $W^{1,1}_{\rm T}([a,b])$. Here is the main result of the paper.

\begin{theorem}\label{General-Regularity-Theorem}
Let $c>0$ and let $\delta_0\in\Delta_c$. If {\rm (H$_1$)}, {\rm (H$_2$)} and {\rm (H$^{K,\delta_0}_{\omega}$)} hold then
$$
\mathcal{L}_{\omega}(L,K,c,\delta_0)\subset W^{1,1}_{\rm T}([a,b]),
$$
i.e., every $u\in\mathcal{L}_{\omega}(L,K,c,\delta_0)$ has Tonelli's partial regularity.
\end{theorem}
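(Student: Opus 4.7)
The plan is to follow Sychev's direct method for establishing Tonelli's partial regularity, adapted to absorb the extra obstacle contribution $\omega(|k_u(s,t)|,|s-t|)|s-t|$ appearing in (A$_3$). Concretely, I would first derive a quantitative oscillation inequality for the chord-slope map $(s,t)\mapsto k_u(s,t)$, then pass to the limits $t\to s$ to obtain an extended derivative $\tilde u'\colon[a,b]\to[-\infty,\infty]$, and finally verify that $\tilde u'$ is continuous and coincides with $u'$ wherever it is finite.

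\textbf{The main technical step} (the obstacle-version of \cite[Lemma~1.1]{sychev92} alluded to as Lemma~\ref{Main-Lemma}) goes as follows. Fix $[s,t]\subset[a,b]$ with $t-s\le\delta_0$. Using $u_{s,t}$ as a competitor in (A$_3$) and the convexity inequality in (H$_2$) with reference slope $k_u(s,t)$, and noting that $\int_s^t(u'(x)-k_u(s,t))dx=0$ makes the linear $L_v$-term an oscillation integral that can be estimated through Hölder continuity of $L_v$ on the compact $K_0$ of \eqref{Def-Of-the-compact-K0}, I obtain a bound of the shape
\[
\frac{\mu}{2}\int_s^t\bigl(u'(x)-k_u(s,t)\bigr)^{2}dx\le \bigl[\text{H\"older error on $K_0$}\bigr]+\omega\bigl(|k_u(s,t)|,t-s\bigr)(t-s).
\]
Combining this with Chebyshev's inequality and the midpoint identity $k_u(s,t)=\tfrac{m-s}{t-s}k_u(s,m)+\tfrac{t-m}{t-s}k_u(m,t)$ yields an oscillation estimate $|k_u(s,m)-k_u(m,t)|\le C\,\overline\omega(|k_u(s,t)|,t-s)$ with $\overline\omega$ exactly as in \eqref{def-of-omega-alpha}: the square roots and the exponent $\alpha(k)$ come from extracting an $L^2$-bound from an $L^1$-bound and from the H\"older exponent of $L$ on $K_0\times[-(k+M(k)),k+M(k)]$.

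\textbf{From the oscillation bound to the regularity conclusion.} Iterating the estimate dyadically over scales $2^{-j}(t-s)$, the telescoping series of oscillations is controlled by a Riemann sum for $\int_0^{\exp\eps}\overline\omega(k,\xi)\tfrac{d\xi}{\xi}$, which by (H$^{K,\delta_0}_\omega$) converges and vanishes as $\eps\to 0$. This simultaneously forces, at every $s\in[a,b]$, the existence of the one-sided limits $\tilde u'(s):=\lim_{t\to s}k_u(s,t)\in[-\infty,\infty]$, and, by applying the same estimate at two nearby base points, the continuity of $\tilde u'\colon[a,b]\to[-\infty,\infty]$. The finiteness set $\{\tilde u'\in\RR\}$ is then open by continuity; on it $u$ is classically differentiable with $u'=\tilde u'$, while at points where $\tilde u'(s)=\pm\infty$ one directly reads off $u'(s)=\pm\infty$. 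This is exactly Definition \ref{TPR-Def}, so $u\in W^{1,1}_{\rm T}([a,b])$.

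\textbf{The main obstacle} is that the new term $\omega(|k_u|,\cdot)$ is slope-dependent, and an uncontrolled slope $|k_u|$ could defeat both the H\"older estimate on $L$ and the dyadic Dini sum. The hypothesis (H$^{K,\delta_0}_\omega$) is deliberately stated only \emph{for each fixed} $k$, and the constants $\alpha(k)$, $c(k)$, $M(k)$ encode the admissible slope range. Accordingly, before running the iteration I must first prove, using the uniform convexity of (H$_2$) together with the trapping property \eqref{Property-of-Constant-M(k)}, that the chord slopes encountered during the bisection never exceed $k+M(k)$ once the initial slope is bounded by $k$; only then does the H\"older continuity of $L$ on $K_0\times[-(k+M(k)),k+M(k)]$ legitimately enter $\overline\omega$, allowing the Dini sum in (H$^{K,\delta_0}_\omega$) to close the argument. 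The inflation of $K$ by $\delta_0+N\sqrt{\delta_0}$ in \eqref{Def-Of-the-compact-K0} is exactly what is needed to ensure the comparison functions $u_{s,t}$ stay inside the enlarged compact where the H\"older estimate is valid.
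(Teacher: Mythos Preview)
Your overall strategy---deriving an $L^2$ bound on $u'-k_u(s,t)$ from (A$_3$) and convexity, iterating across scales, and closing via the Dini hypothesis (H$^{K,\delta_0}_\omega$)---is indeed the paper's method. Two of your technical steps, however, do not go through as written.

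First, you invoke ``H\"older continuity of $L_v$ on the compact $K_0$'' to dispose of the linear term $\int_s^t[L_v(x,u(x),k)-L_v(s,u(s),k)](u'-k)\,dx$. But (H$_1$) gives H\"older continuity only of $L$, and (H$_2$) gives only $L_v,L_{vv}\in C$; if you run your estimate with the bare modulus of continuity $\omega_{L_v}$, the error picks up a term $\omega_{L_v}(\eps+N\sqrt\eps)$ whose Dini integral is \emph{not} controlled by (H$^{K,\delta_0}_\omega$). The paper circumvents this entirely by splitting $[s,t]$ into $T=\{|u'-k|\ge\Delta_1\}$, where convexity alone yields the quadratic lower bound \eqref{First-SEt-ImplicatIOn-AssERtion}, and its complement, where $|u'-k|<\Delta_1$ is already small so that only the sup-bound $|L_v|\le C_2(k)$ and H\"older continuity of $L$ (at bounded velocities) are needed. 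This two-tier structure is precisely the pair $\Delta_1,\Delta_2$ in Step~1 of the proof of Lemma~\ref{Main-Lemma}, and it is what keeps $\omega_{L_v}$ out of $\overline\omega$.

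Second, your slope control---that chord slopes during the iteration never exceed $k+M(k)$ thanks to \eqref{Property-of-Constant-M(k)}---misreads the role of $M(k)$. Inequality \eqref{Property-of-Constant-M(k)} is a threshold on the \emph{pointwise} deviation $\zeta=u'(x)-p$, used only to make the quadratic term dominate in \eqref{First-SEt-ImplicatIOn-AssERtion}; it says nothing about how the averaged slopes $k_u$ drift across scales. The circularity (the admissible slope range enters $\overline\omega$, which in turn controls the slope drift) is resolved in the paper by the self-referential definition $\delta(k,\eps)=\min\{\eta\ge 0:4\int_0^{\exp\eps}\Delta(k+\eta,\xi)\,d\xi/\xi\le\eta\}$ in Step~2 of Lemma~\ref{Main-Lemma}; one checks via (H$^{K,\delta_0}_\omega$) that this set is nonempty, and the induction in Lemma~\ref{Auxiliary-Lemma-For-Main-LeMmA} then shows the total slope excursion is bounded by $\delta(k,\eps)$ itself, so that every comparison stays inside $K_0\times[-(k+\delta(k,\eps)),k+\delta(k,\eps)]$.
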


Theorem \ref{General-Regularity-Theorem} can be applied to deal with the regularity of solutions of one dimensional variational obstacle problems of type \eqref{OVP}. 

\subsection{Application to the regularity of solutions of variational obstacle problems} Given $f,g\in W^{1,1}([a,b])$ with $f<g$, we set 
$$
\mathcal{S}_{f,g}:=\Big\{u\in\mathcal{A}_{f,g}:\mathcal{J}_L(u;[a,b])\leq \mathcal{J}_L(v;[a,b])\hbox{ for all }v\in\mathcal{A}_{f,g}\Big\}.
$$
Usually, the functions $f$ and $g$ are called the obstacles. The following lemma makes clear the link between the class  $\mathcal{S}_{f,g}$ of solutions of the variational obstacle problem \eqref{OVP} and the class $\mathcal{L}_{\omega}(L,K,c,\delta_0)$ when the obstacles are $C^1$.
\begin{lemma}\label{Prop-Appli1}
Assume that {\rm (H$_1$)} holds and the obstacles $f$ and $g$ belong to $C^1([a,b])$. Let $K:=\big\{(x,u)\in [a,b]\times \RR:f(x)\leq u\leq g(x)\big\}$, let $u\in W^{1,1}([a,b])$, let $c:=|\mathcal{J}_L(u;[a,b])|+1$ and let $\delta_0>0$. Then
$$
u\in\mathcal{S}_{f,g}\then u\in \mathcal{L}_{\omega}(L,K,c,\delta_0)
$$
with $\omega:[0,\infty[\times[0,\infty[\to[0,\infty[$ given by
\begin{equation}\label{Def-of-omega-for-C1-obstacles}
\omega(k,\eps):=C_0\Big[\big(\omega_f(\eps)+\omega_{f^\prime}(\eps)+k\eps\big)^{\alpha_0}+\big(\omega_g(\eps)+\omega_{g^\prime}(\eps)+k\eps\big)^{\alpha_0}\Big],
\end{equation}
where $\omega_f, \omega_{f^\prime},\omega_g, \omega_{g^\prime}:[0,\infty[\to[0,\infty[$ are the moduli of continuity of $f$, $f^\prime$, $g$ and $g^\prime$ respectively, and $C_0,\alpha_0>0$ are given by {\rm(H$_1$)} with $G=[a,b]\times[-M_1,M_1]\times[-M_2,M_2]$, where $M_1:=\max\left\{\|f\|_\infty,\|g\|_\infty\right\}+M_2(b-a)$ and $M_2:=\max\{\|f^\prime\|_\infty,\|g^\prime\|_\infty\}$.
\end{lemma}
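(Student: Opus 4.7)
The conditions (A$_1$) and (A$_2$) are immediate: (A$_1$) holds because $u\in\mathcal{A}_{f,g}$ means $f\leq u\leq g$ on $[a,b]$, so the graph of $u$ lies in $K$; (A$_2$) holds by the choice $c=|\mathcal{J}_L(u;[a,b])|+1$. The work lies in verifying (A$_3$) for arbitrary $s,t\in[a,b]$ with $s<t$ and $\eps:=t-s\leq\delta_0$.

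The key difficulty is that the linear interpolant $u_{s,t}$ may violate the obstacle constraints and hence need not lie in $\mathcal{A}_{f,g}$, so it cannot be used directly as a competitor to $u$. The natural fix is the truncated competitor
$$
v(x):=\left\{
\begin{array}{ll}
u(x)&\hbox{if }x\in[a,b]\setminus[s,t],\\
\max\bigl\{f(x),\min\{g(x),u_{s,t}(x)\}\bigr\}&\hbox{if }x\in[s,t].
\end{array}
\right.
$$
Since $u_{s,t}(s)=u(s)$ and $u_{s,t}(t)=u(t)$ already satisfy the obstacle constraints at the endpoints, $v$ agrees with $u$ at $s$ and $t$, belongs to $W^{1,1}([a,b])$ and lies in $\mathcal{A}_{f,g}$. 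Minimality of $u$ then yields $\mathcal{J}_L(u;[s,t])\leq\mathcal{J}_L(v;[s,t])$, reducing the task to bounding $\mathcal{J}_L(v;[s,t])-\mathcal{J}_L(u_{s,t};[s,t])$ by $\omega(|k_u(s,t)|,\eps)\eps$.

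Write $\ell:=u_{s,t}$ and $k:=k_u(s,t)$. The integrands of $\mathcal{J}_L(v;[s,t])$ and $\mathcal{J}_L(\ell;[s,t])$ coincide outside $E:=\{\ell<f\}\cup\{\ell>g\}$. Since $\ell(s),\ell(t)\in[f,g]$, every connected component of $\{\ell<f\}$ is an open subinterval $(\alpha,\beta)\subset(s,t)$ with $\ell(\alpha)=f(\alpha)$ and $\ell(\beta)=f(\beta)$. Applying the mean value theorem to $f-\ell$ on $(\alpha,\beta)$ produces some $\xi\in(\alpha,\beta)$ with $f^\prime(\xi)=k$, whence $|f^\prime(x)-k|\leq\omega_{f^\prime}(\eps)$ on the component and $|k|\leq\|f^\prime\|_\infty\leq M_2$; writing $f(x)-\ell(x)=(f(x)-f(\alpha))-k(x-\alpha)$ gives $|f(x)-\ell(x)|\leq\omega_f(\eps)+|k|\eps$. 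The symmetric analysis on $\{\ell>g\}$ yields the analogous bounds with $g,g^\prime$ replacing $f,f^\prime$. Consequently, whenever $E\neq\emptyset$ one has $|k|\leq M_2$ and $|\ell(x)|\leq\max\{\|f\|_\infty,\|g\|_\infty\}+M_2(b-a)=M_1$, so that every triple $(x,\ell(x),k)$, $(x,f(x),f^\prime(x))$ and $(x,g(x),g^\prime(x))$ encountered in the estimate lies in the compact $G=[a,b]\times[-M_1,M_1]\times[-M_2,M_2]$.

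Applying (H$_1$) with the constants $C_0,\alpha_0$ attached to $G$, on $\{\ell<f\}$ one gets
$$
|L(x,v,v^\prime)-L(x,\ell,k)|=|L(x,f,f^\prime)-L(x,\ell,k)|\leq C_0\bigl(\omega_f(\eps)+\omega_{f^\prime}(\eps)+|k|\eps\bigr)^{\alpha_0},
$$
with the symmetric estimate on $\{\ell>g\}$. Since each of these two sets has measure at most $\eps$, integration yields exactly $\mathcal{J}_L(v;[s,t])-\mathcal{J}_L(\ell;[s,t])\leq \omega(|k|,\eps)\eps$ with $\omega$ as in \eqref{Def-of-omega-for-C1-obstacles}; if $E=\emptyset$ then $v=u_{s,t}$ and (A$_3$) is trivial. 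The main obstacle is the mean value step, which must simultaneously supply the pointwise control $|f^\prime(x)-k|\leq\omega_{f^\prime}(\eps)$ and the a priori bound $|k|\leq M_2$ that confines the whole estimate to the fixed compact $G$.
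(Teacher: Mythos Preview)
Your proof is correct and follows essentially the same route as the paper's: truncate the linear interpolant $u_{s,t}$ against the obstacles to obtain an admissible competitor, decompose the ``bad'' set $\{u_{s,t}<f\}\cup\{u_{s,t}>g\}$ into open components, apply the mean value theorem on each component to extract a point $\xi$ with $f'(\xi)=k$ (resp.\ $g'(\xi)=k$), and use this both for the pointwise modulus-of-continuity bounds and for the a priori control $|k|\le M_2$ that confines all relevant triples to the fixed compact $G$, so that (H$_1$) with constants $C_0,\alpha_0$ can be invoked. The paper's argument differs only in presentation, not in substance.
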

\begin{proof}[\bf Proof of Lemma \ref{Prop-Appli1}]
We only have to prove that  (A$_3$) is satisfied with $\omega$ given by \eqref{Def-of-omega-for-C1-obstacles}. 
\smallskip

\paragraph{\bf Step 1: defining an admissible function with respect to the obstacles} Let $s,t\in[a,b]$ with $s<t$ and $|s-t|\leq\delta_0$. Let $v_{s,t}\in W^{1,1}([a,b])$ be given by
$$
v_{s,t}(x)=\left\{
\begin{array}{ll}
u_{s,t}(x)&\hbox{if }f(x)\leq u_{s,t}(x)\leq g(x)\\
f(x)&\hbox{if }f(x)>u_{s,t}(x)\\
g(x)&\hbox{if }u_{s,t}(x)>g(x)
\end{array}
\right.
$$
where $u_{s,t}$ is defined in \eqref{Def-Of-u-s-t}. Note that  
$$
v_{s,t}(x)=u(x)\hbox{ for all }x\in[a,b]\setminus]s,t[.
$$
because $u\in\mathcal{A}_{f,g}$. Then $v_{s,t}\in\mathcal{A}_{f,g}$ and so
$$
\mathcal{J}_L(u;[a,b])\leq \mathcal{J}_L(v_{s,t};[a,b])
$$
because $u\in\mathcal{S}_{f,g}$. 

\smallskip

\paragraph{\bf Step 2: using condition (H\boldmath$_1$\unboldmath)} Set:
\begin{enumerate}
\item[$\bullet$] $A_f:=\{x\in]s,t[:f(x)>u_{s,t}(x)\};$ 
\item[$\bullet$] $B_g:=\{x\in]s,t[:u_{s,t}(x)>g(x)\}$.
\end{enumerate}
(Note that much of the arguments in the proof rely on both $A_f$ and $B_g$ being non-empty. If one is or both are empty then the arguments simplify.) Then, we have 
\begin{eqnarray}
\mathcal{J}_L(u;[a,b])&\leq& \mathcal{J}_L(u_{s,t};[a,b])+\mathcal{J}_L(v_{s,t};[a,b])-\mathcal{J}_L(u_{s,t};[a,b])\nonumber\\
&=&\mathcal{J}_L(u_{s,t};[a,b])+\mathcal{J}_L(f;A_f)-\mathcal{J}_L(u_{s,t};A_f)+\mathcal{J}_L(g;B_g)-\mathcal{J}_L(u_{s,t};B_g)\nonumber\\
&\leq&\mathcal{J}_L(u_{s,t};[a,b])\nonumber\\
&&+\int_{A_f}\big|L(x,f(x),f^\prime(x))-L(x, u_{s,t}(x),k_u(s,t))\big|dx\nonumber\\
&&+\int_{B_g}\big|L(x,g(x),g^\prime(x))-L(x, u_{s,t}(x),k_u(s,t))\big|dx.\label{OVP-Prop-Eq0}
\end{eqnarray}
Since $A_f$ and $B_g$ are open sets, there exist two disjointed countable sequences $\left\{]\alpha_i,\alpha_{i+1}[\right\}_{i\geq 1}$ and $\left\{]\beta_i,\beta_{i+1}[\right\}_{i\geq 1}$ of open intervals with $s\leq\alpha_i<\alpha_{i+1}\leq t$ and $s\leq\beta_i<\beta_{i+1}\leq t$ for all $i\geq 1$ such that:
\begin{eqnarray}
&& A_f=\cupp\limits_{i\geq 1}]\alpha_i,\alpha_{i+1}[\hbox{ and }f(\alpha_i)=u_{s,t}(\alpha_i)\hbox{ for all }i\geq 1;\label{OVP-Prop-Eq2}\\
&& B_g=\cupp\limits_{i\geq 1}]\beta_i,\beta_{i+1}[\hbox{ and }g(\beta_i)=u_{s,t}(\beta_i)\hbox{ for all }i\geq 1.\label{OVP-Prop-Eq3}
\end{eqnarray}
By using Lagrange's finite-increment theorem, we can assert that there exists two sequences $\{x_i\}_{i\geq 1}$ and $\{y_i\}_{i\geq 1}$ with $\alpha_i<x_i<\alpha_{i+1}$ and $\beta_i<y_i<\beta_{i+1}$ such that:
\begin{eqnarray}
&& k_u(s,t)=f^\prime(x_i)\hbox{ for all }i\geq 1;\label{OVP-Prop-Eq4}\\
&& k_u(s,t)=g^\prime(y_i)\hbox{ for all }i\geq 1.\label{OVP-Prop-Eq5}
\end{eqnarray}
For $h\in C([a,b])$ we set $\|h\|_\infty:=\sup\left\{|h(x)|:x\in[a,b]\right\}$. From the above it follows that:
\begin{eqnarray}
&& |u_{s,t}(x)|\leq \max\left\{\|f\|_\infty,\|g\|_\infty\right\}+\|f^\prime\|_\infty(b-a)\hbox{ for all }x\in A_f;\label{OVP-Prop-Eq6}\\
&&  |u_{s,t}(x)|\leq \max\left\{\|f\|_\infty,\|g\|_\infty\right\}+\|g^\prime\|_\infty(b-a)\hbox{ for all }x\in B_g.\label{OVP-Prop-Eq7}
\end{eqnarray}
Indeed, given $x\in A_f$ there exists $i\geq 1$ such that $x\in]\alpha_i,\alpha_{i+1}[$. Hence, using \eqref{OVP-Prop-Eq4} we have 
\begin{eqnarray*}
|u_{s,t}(x)|&\leq& |u(s)|+|k_u(s,t)||x-s|\\
&\leq& |u(s)|+|f^\prime(x_i)|(b-a) \\
&\leq& |u(s)|+\|f^\prime\|_\infty(b-a),
\end{eqnarray*}
and \eqref{OVP-Prop-Eq6} follows because $f\leq u\leq g$. By using the same reasoning with \eqref{OVP-Prop-Eq5} instead of \eqref{OVP-Prop-Eq4} we obtain \eqref{OVP-Prop-Eq7}. Moreover, it is easy to see that:
\begin{enumerate}
\item[$\bullet$] $x\in[a,b]$;
\item[$\bullet$]  $|f(x)|\leq \|f\|_\infty$, $|f^\prime(x)|\leq \|f^\prime\|_\infty$, $|g(x)|\leq \|g\|_\infty$ and $|g^\prime(x)|\leq \|g^\prime\|_\infty$ for all $x\in[a,b]$;
\item[$\bullet$] $|k_u(s,t)|\leq\max\{\|f^\prime\|_\infty,\|g^\prime\|_\infty\}$ by \eqref{OVP-Prop-Eq4} and \eqref{OVP-Prop-Eq5},
\end{enumerate}
and consequently, we have:
\begin{enumerate}
\item[$\bullet$]  $(x,f(x),f^\prime(x))\in[a,b]\times[-M_1,M_1]\times[-M_2,M_2]\hbox{ for all }x\in A_f;$
\item[$\bullet$] $(x,g(x),g^\prime(x))\in[a,b]\times[-M_1,M_1]\times[-M_2,M_2]\hbox{ for all }x\in B_g;$
\item[$\bullet$] $(x,u_{s,t}(x),k_u(s,t))\in [a,b]\times[-M_1,M_1]\times[-M_2,M_2]\hbox{ for all }x\in A_f\cup B_g$.
\end{enumerate}
But, using (H$_1$) we can assert that 
$$
|L(x_1,u_1,v_1)-L(x_2, u_2,v_2)|\leq C_0\big(|x_1-x_2|+|u_1-u_2|+|v_1-v_2|\big)^{\alpha_0}
$$
for all $(x_1,u_1,v_1), (x_2, u_2,v_2)\in [a,b]\times[-M_1,M_1]\times[-M_2,M_2]$, and so:
\begin{enumerate}
\item[$\bullet$] $\big|L(x,f(x),f^\prime(x))-L(x, u_{s,t}(x),k_u(s,t))\big|\leq C_0\big(|f(x)-u_{s,t}(x)|+|f^\prime(x)-k_u(s,t)|\big)^{\alpha_0}$ for all $x\in A_f$;
\item[$\bullet$] $\big|L(x,g(x),g^\prime(x))-L(x, u_{s,t}(x),k_u(s,t))\big|\leq C_0\big(|g(x)-u_{s,t}(x)|+|g^\prime(x)-k_u(s,t)|\big)^{\alpha_0}$ for all $x\in B_g$.
\end{enumerate}
Fix any $x\in A_f$. Then, by \eqref{OVP-Prop-Eq2} there exists $i\geq 1$ such that $x\in]\alpha_i,\alpha_{i+1}[$ and, since  $f(\alpha_i)=u_{s,t}(\alpha_i)$,  we see that
\begin{eqnarray*}
|f(x)-u_{s,t}(x)|&\leq& |f(x)-f(\alpha_i)|+|f(\alpha_i)-u_{s,t}(x)|\\
&=& |f(x)-f(\alpha_i)|+|u_{s,t}(\alpha_i)-u_{s,t}(x)|\\
&=& |f(x)-f(\alpha_i)|+|k_u(s,t)||x-\alpha_i|\\
&\leq&\omega_f(|s-t|)+|k_u(s,t)||s-t|.
\end{eqnarray*}
 Moreover, by \eqref{OVP-Prop-Eq4} we have 
 $$
 |f^\prime(x)-k_u(s,t)|=|f^\prime(x)-f^\prime(x_i)|\leq \omega_{f^\prime}(|s-t|).
 $$
 Consequently
\begin{eqnarray}\label{OVP-Prop-Eq--1}
\big|L(x,f(x),f^\prime(x))-L(x, u_{s,t}(x),k_u(s,t))\big|\leq \omega_1(|k_u(s,t)|,|s-t|)\hbox{ for all }x\in A_f
\end{eqnarray}
with $\omega_1:[0,\infty[\times[0,\infty[\to[0,\infty[$ given by
\begin{equation}\label{OVP-Prop-Eq--1-def}
\omega_1(k,\eps):=C_0\big(\omega_f(\eps)+\omega_{f^\prime}(\eps)+k\eps\big)^{\alpha_0}.
\end{equation}
In the same manner, by using \eqref{OVP-Prop-Eq3} and \eqref{OVP-Prop-Eq5} instead of \eqref{OVP-Prop-Eq2} and \eqref{OVP-Prop-Eq4}, we obtain
\begin{eqnarray}\label{OVP-Prop-Eq--2}
\big|L(x,g(x),g^\prime(x))-L(x, u_{s,t}(x),k_u(s,t))\big|\leq \omega_2(|k_u(s,t)|,|s-t|)\hbox{ for all }x\in B_g
\end{eqnarray}
with $\omega_2:[0,\infty[\times[0,\infty[\to[0,\infty[$ given by
\begin{equation}\label{OVP-Prop-Eq--2-def}
\omega_2(k,\eps):=C_0\big(\omega_g(\eps)+\omega_{g^\prime}(\eps)+k\eps\big)^{\alpha_0}.
\end{equation}

\smallskip

\paragraph{\bf Step 3: end of the proof} Let $\omega:[0,\infty[\times[0,\infty[\to[0,\infty[$ be defined by
$$
\omega(k,\eps):=\omega_1(k,\eps)+\omega_2(k,\eps).
$$
Then, the function $\omega$ is increasing in both arguments and from \eqref{OVP-Prop-Eq--1-def} and \eqref{OVP-Prop-Eq--2-def} we see that $\omega(k,0)=0$ for all $k\in[0,\infty[$. Moreover, combining \eqref{OVP-Prop-Eq0} with \eqref{OVP-Prop-Eq--1} and \eqref{OVP-Prop-Eq--2} we deduce that
\begin{eqnarray*}
\mathcal{J}_L(u;[a,b])&\leq& \mathcal{J}_L(u_{s,t};[a,b])+\int_{A_f}\omega_1(|k_u(s,t)|,|s-t|)dx+\int_{B_g}\omega_2(|k_u(s,t)|,|s-t|)dx\\
&=&\mathcal{J}_L(u_{s,t};[a,b])+\omega_1(|k_u(s,t)|,|s-t|)|A_f|+\omega_2(|k_u(s,t)|,|s-t|)|B_g|.
\end{eqnarray*}
But $A_f$ and $B_g$ are subsets of $]s,t[$, hence $|A_f|\leq|s-t|$ and $|B_g|\leq|s-t|$, and consequently
\begin{eqnarray*}
\mathcal{J}_L(u;[a,b])&\leq&\mathcal{J}_L(u_{s,t};[a,b])+\big(\omega_1(|k_u(s,t)|,|s-t|)+\omega_2(|k_u(s,t)|,|s-t|)\big)|s-t|\\
&=&\mathcal{J}_L(u_{s,t};[a,b])+\omega(|k_u(s,t)|,|s-t|)|s-t|,
\end{eqnarray*}
which shows that (A$_3$) is verified.
\end{proof}

\begin{remark}
In the proof of Lemma \ref{Prop-Appli1} we have in fact established  that (A$_3$) holds without any restriction that $|s-t|<\delta_0$.
\end{remark}

As a consequence of Theorem \ref{General-Regularity-Theorem} and Lemma \ref{Prop-Appli1}, we have

\begin{theorem}\label{Main-Appli-Theorem}
Assume that {\rm(H$_1$)} and {\rm(H$_2$)} are satisfied and the obstacles $f$ and $g$ belong to $C^1([a,b])$. If \eqref{Cond-for-Reg-Sol} holds then
\begin{equation}\label{TPR-of-Solutions-of-(1.1)}
\mathcal{S}_{f,g}\subset W^{1,1}_T([a,b]),
\end{equation}
i.e., every solution of the variational obstacle problem \eqref{OVP} has Tonelli's partial regularity.
\end{theorem}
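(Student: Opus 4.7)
The plan is to chain Lemma \ref{Prop-Appli1} and Theorem \ref{General-Regularity-Theorem}. Given $u \in \mathcal{S}_{f,g}$, I set $K := \{(x,y) \in [a,b]\times \RR : f(x) \leq y \leq g(x)\}$, $c := |\mathcal{J}_L(u;[a,b])| + 1$, and pick any $\delta_0 \in \Delta_c$. Lemma \ref{Prop-Appli1} places $u$ in $\mathcal{L}_\omega(L,K,c,\delta_0)$ for the explicit $\omega$ of \eqref{Def-of-omega-for-C1-obstacles}, so Theorem \ref{General-Regularity-Theorem} will yield $u \in W^{1,1}_T([a,b])$ as soon as (H$^{K,\delta_0}_\omega$) is verified for this specific $\omega$. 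The entire problem thus reduces to deriving
\[
\lim_{\eps \to 0} \int_0^{\exp\eps} \overline{\omega}(k,\xi)\, \frac{d\xi}{\xi} = 0 \quad \text{for each fixed } k \geq 0
\]
from the assumption \eqref{Cond-for-Reg-Sol}.

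I would unfold this by first expanding $\omega(k, \xi + N\sqrt{\xi})$ via \eqref{Def-of-omega-for-C1-obstacles}, using the elementary inequalities $(a+b+c)^{\alpha_0} \leq 3^{\max(1,\alpha_0)}(a^{\alpha_0}+b^{\alpha_0}+c^{\alpha_0})$ and $\sqrt{a+b} \leq \sqrt{a}+\sqrt{b}$, so that $\widehat{\omega}(k,\xi) = \sqrt{\omega(k,\xi + N\sqrt{\xi})}$ decomposes into a finite sum of two kinds of summand: obstacle-modulus pieces $[\omega_h(\xi + N\sqrt{\xi})]^{\alpha_0/2}$ with $h \in \{f,f',g,g'\}$, and polynomial pieces $(k(\xi+N\sqrt{\xi}))^{\alpha_0/2}$. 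Substituting into the three summands $[\sqrt{\widehat{\omega}}]^{\alpha(k)}$, $\sqrt{\widehat{\omega}}$ and $\widehat{\omega}$ of \eqref{def-of-omega-alpha} produces a new finite sum of terms of exactly the same two types, now with a finite collection of positive exponents $\theta_j$ depending only on $\alpha_0$ and $\alpha(k)$.

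I then integrate summand by summand. The obstacle-modulus summands $\int_0^{\exp\eps}[\omega_h(\xi + N\sqrt{\xi})]^{\theta_j}\, d\xi/\xi$ are exactly the quantities controlled by \eqref{Cond-for-Reg-Sol} with $\gamma = N$ and $\theta = \theta_j$, so they tend to $0$ as $\eps \to 0$. Each polynomial summand is bounded, on a right neighborhood of $0$ (where $\xi \leq N\sqrt{\xi}$), by a constant multiple of $\xi^{\theta_j/2 - 1}$, whose integral from $0$ to $\exp\eps$ is of order $(\exp\eps)^{\theta_j/2}$ and therefore vanishes as well. Summing the finitely many vanishing contributions gives (H$^{K,\delta_0}_\omega$), and Theorem \ref{General-Regularity-Theorem} then closes the argument.

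The main obstacle is not conceptually deep but rather careful bookkeeping: one must check that every exponent $\theta_j$ appearing in the expansion of $\overline{\omega}$ is strictly positive, so that \eqref{Cond-for-Reg-Sol} applies and the polynomial estimate is non-degenerate, and that the multiplicative constants (depending on $k$, $N$, $C_0$, $\alpha_0$ and $\alpha(k)$) are harmless because $k$ is fixed throughout the limit $\eps \to 0$.
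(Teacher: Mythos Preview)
Your proposal is correct and follows essentially the same route as the paper's own proof: reduce to verifying (H$^{K,\delta_0}_\omega$) for the $\omega$ of \eqref{Def-of-omega-for-C1-obstacles}, decompose $\overline{\omega}(k,\xi)$ into finitely many terms of the form $[\omega_h(\xi+N\sqrt{\xi})]^{\theta_j}$ and $[\xi+N\sqrt{\xi}]^{\theta_j}$, and handle the first kind via \eqref{Cond-for-Reg-Sol} with $\gamma=N$ and the second by an elementary power integral. The paper merely records the concrete exponents $\theta_j\in\{\tfrac14\alpha_0\alpha(k),\,\tfrac14\alpha_0,\,\tfrac12\alpha_0\}$ and writes the polynomial estimate out in full, but the argument is the same.
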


\begin{proof}[\bf Proof of Theorem \ref{Main-Appli-Theorem}]
Let $u\in\mathcal{S}_{f,g}$. To show that $u\in W^{1,1}_T([a,b])$ it suffices to prove that (H$^{K,\delta_0}_{\omega}$) holds with $K=\big\{(x,u)\in [a,b]\times \RR:f(x)\leq u\leq g(x)\big\}$, $\delta_0\in \Delta_c$ with $c=|\mathcal{J}_L(u;[a,b])|+1$ and $\omega$ given by \eqref{Def-of-omega-for-C1-obstacles}. (Indeed, from Lemma \ref{Prop-Appli1} we then have $u\in \mathcal{L}_\omega(L,K,c,\delta_0)$ and so $u\in W^{1,1}_T([a,b])$ by Theorem \ref{General-Regularity-Theorem}.) According to \eqref{Def-of-omega-for-C1-obstacles} and \eqref{def-of-omega-alpha} it is easily seen that to verify (H$^{K,\delta_0}_{\omega}$) we only need to establish that for $N>0$ given by Remark \ref{Preliminary-Remark}, one has:
\begin{eqnarray}
&&\lim_{\eps\to0}\int_0^{\exp\eps}\left[\omega_h(\xi+N\sqrt{\xi})\right]^{{1\over 4}\alpha_0\alpha(k)}{d\xi\over\xi}=0\hbox{ for all } k\in[0,\infty[;\label{EqUAtION-Appli-THeoREM-1}\\
&& \lim_{\eps\to0}\int_0^{\exp\eps}\left[\omega_h(\xi+N\sqrt{\xi})\right]^{{1\over 4}\alpha_0}{d\xi\over\xi}=0;\label{EqUAtION-Appli-THeoREM-2} \\
&& \lim_{\eps\to0}\int_0^{\exp\eps}\left[\omega_h(\xi+N\sqrt{\xi})\right]^{{1\over 2}\alpha_0}{d\xi\over\xi}=0;\label{EqUAtION-Appli-THeoREM-3}\\
&&\lim_{\eps\to0}\int_0^{\exp\eps}\left[\xi+N\sqrt{\xi}\right]^\beta{d\xi\over\xi}=0\hbox{ for all }\beta>0\label{EqUAtION-Appli-THeoREM-4}.
\end{eqnarray}
But \eqref{EqUAtION-Appli-THeoREM-1}, \eqref{EqUAtION-Appli-THeoREM-2} and \eqref{EqUAtION-Appli-THeoREM-3} are clearly true by applying \eqref{Cond-for-Reg-Sol} with $\gamma=N$ and respectively $\theta={1\over 4}\alpha_0\alpha(k)$, $\theta={1\over 4}\alpha_0$ and $\theta={1\over 2}\alpha_0$. Moreover, for every $\eps>0$ we have 
\begin{eqnarray*}
\int_0^{\exp\eps}\left[\xi+N\sqrt{\xi}\right]^\beta{d\xi\over\xi}&\leq& C\left(\int_0^{\exp\eps}\xi^{\beta-1}d\xi+N^\beta\int_0^{\exp\eps}\xi^{{1\over 2}\beta-1}d\xi\right)\\
&=&C\left({(\exp\eps)^\beta\over\beta}+{2N^\beta}{(\exp\eps)^{{1\over 2}\beta}\over\beta}\right)
\end{eqnarray*}
with $C>0$ (depending on $\beta$). Hence \eqref{EqUAtION-Appli-THeoREM-4} holds, and the proof is complete.
\end{proof}

\medskip

As a direct consequence of Theorem \ref{Main-Appli-Theorem} we have

\begin{corollary}\label{Main-Coro-Appli-2}
If {\rm(H$_1$)} and {\rm(H$_2$)} are satisfied and the obstacles $f$ and $g$ belong to $C^{1,\sigma}([a,b])$, then \eqref{TPR-of-Solutions-of-(1.1)} holds.
\end{corollary}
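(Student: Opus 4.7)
The plan is to reduce the claim to Theorem \ref{Main-Appli-Theorem} by verifying the integral condition \eqref{Cond-for-Reg-Sol} for $C^{1,\sigma}$ obstacles; the whole argument amounts to a power-count of the singularity in $d\xi/\xi$.

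First, I would observe that if $f,g\in C^{1,\sigma}([a,b])$ then for each $h\in\{f,f',g,g'\}$ the modulus of continuity satisfies a Hölder estimate on $[0,b-a]$. Indeed, $f'$ and $g'$ are Hölder continuous of exponent $\sigma$ by hypothesis, while $f$ and $g$ themselves are Lipschitz (having bounded derivatives on a compact interval), so in particular their moduli satisfy $\omega_h(\eta)\leq C\eta^\sigma$ on $[0,1]$ for some constant $C=C(f,g,\sigma)>0$, since we may assume $\sigma\in(0,1]$ and $\eta\leq\eta^\sigma$ for $\eta\leq 1$.

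Next, fix $\gamma,\theta>0$ and $h\in\{f,f',g,g'\}$. For $\xi\in(0,1)$ we have $\xi+\gamma\sqrt{\xi}\leq(1+\gamma)\sqrt{\xi}$, hence, using monotonicity of $\omega_h$ and the Hölder bound above,
\[
\bigl[\omega_h(\xi+\gamma\sqrt{\xi})\bigr]^{\theta}\leq C^{\theta}(1+\gamma)^{\sigma\theta}\,\xi^{\sigma\theta/2}.
\]
Consequently, for $\eps>0$ small enough that $e\,\eps<1$,
\[
\int_{0}^{\exp\eps}\bigl[\omega_h(\xi+\gamma\sqrt{\xi})\bigr]^{\theta}\,\frac{d\xi}{\xi}\leq C^{\theta}(1+\gamma)^{\sigma\theta}\int_{0}^{e\eps}\xi^{\sigma\theta/2-1}\,d\xi=\frac{2C^{\theta}(1+\gamma)^{\sigma\theta}}{\sigma\theta}\,(e\eps)^{\sigma\theta/2},
\]
which tends to $0$ as $\eps\to 0$. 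Thus \eqref{Cond-for-Reg-Sol} holds.

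Having verified \eqref{Cond-for-Reg-Sol}, I would simply invoke Theorem \ref{Main-Appli-Theorem} to conclude that \eqref{TPR-of-Solutions-of-(1.1)} holds, i.e., every solution of the obstacle problem \eqref{OVP} has Tonelli's partial regularity. There is no genuine obstacle in the argument; the only point requiring a moment of care is handling $h=f$ and $h=g$ (for which the natural modulus bound is linear, not of order $\sigma$), which is dispatched by the trivial inequality $\eta\leq\eta^\sigma$ for $\eta\leq 1$.
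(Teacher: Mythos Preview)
Your proposal is correct and follows essentially the same approach as the paper: both verify that $C^{1,\sigma}$ obstacles yield $\omega_h(\xi)\leq c\xi^\sigma$ and then invoke Theorem~\ref{Main-Appli-Theorem}. The paper's proof is a two-line sketch that stops at the Hölder bound and asserts \eqref{Cond-for-Reg-Sol} ``follows'', whereas you spell out the integral estimate explicitly (and take the extra care of noting that $f,g$ are Lipschitz rather than merely $C^\sigma$, so that the bound $\eta\leq\eta^\sigma$ for $\eta\leq 1$ is needed); one very minor point is that your Hölder bound is stated only for $\eta\in[0,1]$ while $(1+\gamma)\sqrt{\xi}$ could in principle exceed $1$, but this is harmless since one can either shrink $\eps$ further or extend the bound to all $\eta\geq 0$ by enlarging $C$.
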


\begin{proof}[\bf Proof of Corollary \ref{Main-Coro-Appli-2}]
As the obstacles $f$ and $g$ belong to $C^{1,\sigma}([a,b])$, we have
$$
\omega_h(\xi)\leq c\xi^\sigma
$$
for all $h\in\{f,f^\prime,g,g^\prime\}$, all $\xi\geq 0$ and some $c>0$, and \eqref{Cond-for-Reg-Sol} follows.
\end{proof}

\section{A technical lemma}

In this section we prove the following technical lemma which is a generalization of \cite[Lemma 1.1]{sychev92}. (This lemma plays an essential role in the proof of Theorem \ref{General-Regularity-Theorem}.)

\begin{lemma}\label{Main-Lemma}
Let $c>0$ and let $\delta_0\in\Delta_c$. If {\rm(H$_1$)}, {\rm(H$_2$)} and {\rm (H$^{K,\delta_0}_{\omega}$)} hold, then there exists  
$$
\begin{array}{ccccc}
\delta&:&[0,\infty[\times\left[0,{\delta_0\over \exp}\right]&\to&[0,\infty]\\
&&(k,\eps)&\mapsto&\delta(k,\eps)
\end{array}
$$ 
with the following properties{\rm:}
\begin{enumerate}
\item[\rm (P$_1$)] $\delta$ is increasing in both arguments and $\delta(k,0)=0$ for all $k\in[0,\infty[;$ 
\item[\rm(P$_2$)]  there exist $C:[0,\infty[\to]0,\infty[$ and $\eps_0:[0,\infty[\to\left]0,{\delta_0\over \exp}\right]$ such that
$$
\delta(k,\eps)\leq C(k)\left[\left(\sqrt{\eps}\right)^{{1\over 4}\min\{\alpha(k),\alpha^2(k)\}}+\int_0^{\exp\eps}\overline{\omega}(k+\eta_0,\xi){d\xi\over\xi}\right]
$$
for all $k\in[0,\infty[$ and all $\eps\in[0,\eps_0(k)]$ with $\eta_0>0$ an arbitrary fixed constant and $\overline{\omega}$ given by \eqref{def-of-omega-alpha}, and so
\begin{equation}\label{Fundamental-Limit}
\lim_{\eps\to0}\delta(k,\eps)=0
\end{equation}
for all $k\in[0,\infty[$.
\end{enumerate}
Moreover, for every $u\in\mathcal{L}_{\omega}(L,K,c,\delta_0)$ one has
\begin{enumerate}
\item[\rm (P$_3$)] for every $x_1,x_2\in[a,b]$ with $0<x_2-x_1\leq{\delta_0\over\exp}$, \begin{equation}\label{Regu-Fond-Inequality}
\left|k_u(x_1,x_2)-k_u(s,t)\right|\leq\delta\big(\left|k_u(x_1,x_2)\right|,|x_1-x_2|\big)%.
\end{equation}
for all $s,t\in[x_1,x_2]$ with $s<t$.
\end{enumerate}
\end{lemma}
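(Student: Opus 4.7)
The plan is to adapt the strategy of Sychev's Lemma 1.1 in \cite{sychev92} (which handles the case $\omega\equiv 0$) to the obstacle-modified quasi-minimality condition (A$_3$). The function $\delta(k,\eps)$ will be constructed from the estimates that emerge, after which (P$_1$) is immediate (monotonicity of $\omega$ in its arguments, and $\omega(k,0)=0$, give $\delta(k,0)=0$), and both the pointwise bound in (P$_2$) and the limit \eqref{Fundamental-Limit} follow from assumption (H$^{K,\delta_0}_{\omega}$) together with the elementary fact $\int_0^{\exp\eps}\xi^{\beta-1}\,d\xi\to 0$ for every $\beta>0$.

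Fix $u\in\mathcal{L}_\omega(L,K,c,\delta_0)$, $x_1<x_2$ in $[a,b]$ with $\eps:=x_2-x_1\leq\delta_0/\exp$, and set $\kappa:=k_u(x_1,x_2)$. The first main step is a macroscopic energy estimate. Since $u$ and $u_{x_1,x_2}$ coincide outside $(x_1,x_2)$, (A$_3$) applied to the pair $(x_1,x_2)$ gives $\int_{x_1}^{x_2}L(x,u,u')\,dx\leq\int_{x_1}^{x_2}L(x,u_{x_1,x_2},\kappa)\,dx+\omega(|\kappa|,\eps)\eps$, and combining with the strong-convexity inequality from (H$_2$) applied with $v_1=\kappa$, $v_2=u'(x)$ and then integrated yields
\[
\tfrac{\mu}{2}\int_{x_1}^{x_2}(u'-\kappa)^2\,dx \leq \int_{x_1}^{x_2}\bigl[L(x,u_{x_1,x_2},\kappa)-L(x,u,\kappa)\bigr]dx - \int_{x_1}^{x_2}L_v(x,u,\kappa)(u'-\kappa)\,dx + \omega(|\kappa|,\eps)\eps.
\]
The first right-hand integral is estimated via (H$_1$) on the enlarged compact of \eqref{Def-Of-the-compact-K0} together with the Cauchy--Schwarz bound $|u(x)-u_{x_1,x_2}(x)|\leq\sqrt{\eps}\,\|u'-\kappa\|_{L^2(x_1,x_2)}$; the second, exploiting $\int_{x_1}^{x_2}(u'-\kappa)\,dx=0$, is rewritten as $\int_{x_1}^{x_2}[L_v(x,u,\kappa)-L_v(x_1,u(x_1),\kappa)](u'-\kappa)\,dx$ and controlled by the modulus of continuity of $L_v$ on the same compact (note $L_v$ is continuous by (H$_2$)). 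A Young-inequality absorption then yields $\|u'-\kappa\|_{L^2(x_1,x_2)}^2\leq\eps\cdot\Phi(|\kappa|,\eps)$, where $\Phi(|\kappa|,\eps)$ features exactly the $(\sqrt{\eps})^{\alpha(|\kappa|)}$-powers and $\overline{\omega}$-contributions appearing in (P$_2$).

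For (P$_3$), fix $s<t$ in $[x_1,x_2]$ and set $k_1:=k_u(s,t)$. Using the a priori $L^2$-bound of Remark \ref{Preliminary-Remark} together with the defining inequality \eqref{Property-of-Constant-M(k)} of $M(\cdot)$, one first restricts attention to the regime $|k_1|\leq|\kappa|+\eta_0$; outside this range the strong-convexity contribution at slope $k_1$ would force the energy to exceed the budget from (A$_2$)-(A$_3$). Replaying the previous analysis at the microscopic scale $(s,t)$ with slope $k_1$ produces $\|u'-k_1\|_{L^2(s,t)}^2\leq(t-s)\Phi(|k_1|,t-s)$. Writing
\[
(k_1-\kappa)^2(t-s)\leq 2\int_s^t(u'-\kappa)^2\,dx+2\int_s^t(u'-k_1)^2\,dx\leq 2\eps\,\Phi(|\kappa|,\eps)+2(t-s)\Phi(|k_1|,t-s),
\]
we get $|k_1-\kappa|^2\leq 2\eps\,\Phi(|\kappa|,\eps)/(t-s)+2\Phi(|k_1|,t-s)$. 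For $t-s$ comparable to $\eps$ both summands are already $o_\eps(1)$; for $t-s\ll\eps$ the singular factor $\eps/(t-s)$ is neutralised by iterating the previous step through a dyadic chain of intermediate scales between $t-s$ and $\eps$, applying (A$_3$) at each intermediate scale and summing the resulting obstacle contributions into the logarithmic integral of $\overline{\omega}$ that appears in (P$_2$). Declaring the supremum over admissible $s,t$ of this upper bound to be $\delta(|\kappa|,\eps)$, properties (P$_1$) and (P$_2$) can be read off directly.

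The principal technical obstacle is precisely the small-$(t-s)$ regime in the final step: the naive Cauchy--Schwarz bound $|k_1-\kappa|^2\leq\|u'-\kappa\|_{L^2(x_1,x_2)}^2/(t-s)$ degenerates as $t-s\to 0$, and the dyadic bootstrap that repairs it must keep the accumulated obstacle modulus convergent --- exactly what the $d\xi/\xi$-weighted integrability in (H$^{K,\delta_0}_{\omega}$) is engineered to ensure. A secondary bookkeeping difficulty is tracking the $k$-dependence of the H\"older exponent $\alpha(k)$ through the two-scale comparison, so that the composite exponent $\tfrac14\min\{\alpha(k),\alpha(k)^2\}$ in (P$_2$) emerges naturally from matching the Hölder contributions produced at the macroscopic and microscopic scales.
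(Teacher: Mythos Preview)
Your macroscopic $L^2$ estimate is in the right spirit, but the two devices that actually drive the proof of (P$_3$) are missing from your sketch, and without them the argument does not close.

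\medskip

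\textbf{The a priori slope bound is unjustified.} You write that one ``first restricts attention to the regime $|k_1|\leq|\kappa|+\eta_0$'' because otherwise strong convexity would violate the energy budget. This is false: Remark~\ref{Preliminary-Remark} gives only $\|u'\|_{L^2}\leq N$, which by Jensen yields $|k_u(s,t)|\leq N/\sqrt{t-s}$ --- a bound that blows up as $t-s\to 0$, not a uniform bound of the form $|\kappa|+\eta_0$. The energy on $(s,t)$ is of order $t-s$ regardless of how large $|k_1|$ is, so (A$_2$)--(A$_3$) give you nothing here. This is precisely the circularity the paper has to break: to bound $|k_1-\kappa|$ you need $\Phi(|k_1|,\cdot)$, but $|k_1|$ is what you are trying to control.

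\medskip

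\textbf{The fixed-point definition of $\delta$ and the contradiction argument are absent.} The paper breaks the circularity by a self-consistent definition: with $\Delta$ built from the two-scale Lagrangian comparisons (your $\Phi$, roughly), it sets
\[
\delta(k,\eps)=\min\Big\{\eta\geq 0:\ 4\int_0^{\exp\eps}\Delta(k+\eta,\xi)\,\frac{d\xi}{\xi}\leq\eta\Big\},
\]
so that \emph{if} all intermediate secant slopes stay within $\delta(k,\eps)$ of $k$, \emph{then} the accumulated increments at scales $\eps_n=\eps/\exp^{n-1}$ sum to at most $\delta(k,\eps)$. That this hypothesis is actually met is not a direct chain estimate but an induction by contradiction (Lemma~\ref{Auxiliary-Lemma-For-Main-LeMmA}): at the first scale where the claimed bound fails, one uses continuity to pin down $\bar\sigma,\bar\tau$ with $|k-k_u(\bar\sigma,\bar\tau)|$ exactly equal to the threshold, and then Chebyshev-type measure bounds on the ``bad'' sets at the two scales force $3|\bar\sigma-\bar\tau|\leq\exp\,|\bar\sigma-\bar\tau|$, which is impossible. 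Your ``dyadic bootstrap'' gestures at the multi-scale structure but contains neither the fixed-point definition nor the measure/contradiction mechanism; the naive telescoping you describe cannot be executed because each step needs $\Phi$ evaluated at the (unknown) intermediate slope.

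\medskip

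A secondary issue: your treatment of the $L_v$ cross term brings in the modulus of continuity of $L_v$, which is only continuous under (H$_2$), not H\"older. This produces a contribution that does not fit the explicit form of (P$_2$). The paper avoids this by not estimating $\|u'-\kappa\|_{L^2}$ directly: it splits $[x_1,x_2]$ into a ``bad'' set $T=\{|u'-k_1|\geq\Delta_1\}$ (where raw convexity dominates via \eqref{Property-of-Constant-M(k)}) and its complement (where $|u'-k_1|<\Delta_1$ and the H\"older continuity of $L$ alone suffices), obtaining the sharper estimate \eqref{FundaMental-Estimate-Main-Lemma} on $\Omega_{y,z}$ that feeds the measure argument above.
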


\begin{proof}[\bf Proof of Lemma \ref{Main-Lemma}]
For each $(t,y,p)\in[a,b]\times\RR\times\RR$ we define $L_{(t,y,p)}:[a,b]\times\RR\times\RR\to\RR$ by
$$
L_{(t,y,p)}(x,u,v):=L(x,u,v)-L_v(t,y,p)v.
$$
Then, it is easy to see that for every $(t,y,p)\in[a,b]\times\RR\times\RR$ and every $u\in\mathcal{L}_{\omega}(L,K,c,\delta_0)$,
\begin{equation}\label{Obstacle-Problem-Fund-Equation}
\mathcal{J}_{L_{(t,y,p)}}(u;[a,b])\leq \mathcal{J}_{L_{(t,y,p)}}(u_{x_1,x_2};[a,b])+\omega\left(|k_u(x_1,x_2)|,|x_1-x_2|\right)|x_1-x_2|
\end{equation}
for all $x_1,x_2\in[a,b]$ such that $0<x_2-x_1\leq\delta_0$.

\smallskip

\paragraph{\bf Step 1: using (H\boldmath$_1$\unboldmath) and (H\boldmath$_2\unboldmath$)} For each $k\in[0,\infty[$ and each $\eps\in[0,\widehat{\delta}_0]$, where $\widehat{\delta}_0:=\delta_0+N\sqrt{\delta_0}$ with $N>0$ given by Remark \ref{Preliminary-Remark}, we consider $I_{k,\eps}\subset[0,\infty[$ defined as the set of $M\geq 0$ such that for all $x_1,x_2,x_3\in[a,b]$, all $u_1,u_2,u_3\in\RR$ and all $p\in\RR$ with
\begin{equation}\label{First-Set-First-Conditions}
\left\{
\begin{array}{l}
(x_1,u_1)\in K; \\
|x_1-x_i|+|u_1-u_i|\leq\eps\hbox{ for }i=2,3;\\ 
|p|\leq k,
\end{array}
\right.
\end{equation}
and for all $\zeta\in \RR$, one has
\begin{equation}\label{First-SEt-ImplicatIOn-AssERtion}
|\zeta|\geq M\then L_{(x_1,u_1,p)}(x_2,u_2,p+\zeta)-L_{(x_1,u_1,p)}(x_3,u_3,p)\geq{\mu\over 4}\zeta^2+\omega(k,\eps),
\end{equation}
where $\mu>0$ is given by (H$_2$). Let $\Delta_1:[0,\infty[\times\left[0,\widehat{\delta}_0\right]\to[0,\infty[$ be defined by
$$
\Delta_1(k,\eps):=\min I_{k,\eps}.
$$
It is clear that $\Delta_1$ is increasing in both arguments and $\Delta_1(k,0)=0$ for all $k\in[0,\infty[$. We claim that there exist $C_1,\alpha:[0,\infty[\to]0,\infty[$ such that 
\begin{equation}\label{First-Lemma-Set1-EqGoal1}
\Delta_1(k,\eps)\leq \sqrt{2\over\mu}\sqrt{2C_1(k)\eps^{\alpha(k)}+\omega(k,\eps)}
\end{equation}
 for all $k\in[0,\infty[$ and all $\eps\in[0,\widehat{\delta}_0]$. Indeed, fix any $k\in[0,\infty[$ and any $\eps\in[0,\widehat{\delta}_0]$. Let $x_1,x_2,x_3\in[a,b]$,  $u_1,u_2,u_3\in\RR$ and $p\in\RR$ be such that \eqref{First-Set-First-Conditions} is satisfied. First of all, using (H$_2$), for every $\zeta\in\RR$ we have
 \begin{eqnarray*}
L_{(x_1,u_1,p)}(x_2,u_2,p+\zeta)\hskip-1mm-\hskip-1mm L_{(x_1,u_1,p)}(x_3,u_3,p)\hskip-2mm&=&\hskip-2mm L(x_2,u_2,p+\zeta)-L_v(x_1,u_1,p)\zeta-L(x_3,u_3,p)\\
\hskip-2mm&=&\hskip-2mm L(x_2,u_2,p+\zeta)\hskip-1mm-\hskip-1mmL_v(x_2,u_2,p)\zeta\hskip-1mm+\hskip-1mmL_v(x_2,u_2,p)\zeta\\
&&-L_v(x_1,u_1,p)\zeta\hskip-1mm-\hskip-1mmL(x_2,u_2,p)\hskip-1mm+\hskip-1mmL(x_2,u_2,p)\\
&&-L(x_3,u_3,p)\\
\hskip-2mm&\geq&\hskip-2mm L(x_2,u_2,p+\zeta)\hskip-1mm-\hskip-1mmL_v(x_2,u_2,p)\zeta\hskip-1mm-\hskip-1mmL(x_2,u_2,p)\\
&&-\big(|L_v(x_2,u_2,p)|\hskip-1mm+\hskip-1mm|L_v(x_1,u_1,p)|\big)|\zeta|\\
&&-\big(|L(x_2,u_2,p)|\hskip-1mm+\hskip-1mm|L(x_3,u_3,p)|\big)\\
\hskip-2mm&\geq&\hskip-2mm {\mu\over 2}\zeta^2-2c(k)\big(|\zeta|+1\big),
\end{eqnarray*}
where $\mu>0$ is given by (H$_2$) and $c(k)\geq 0$ by \eqref{Def-of-Constant-c(k)}. Consequently, considering $M(k)>0$ with the property \eqref{Property-of-Constant-M(k)} and using the fact that $\omega(k,\cdot)$ is increasing, we can assert that
\begin{equation}\label{NeW-PrOoF-Eq1}
|\zeta|\geq M(k)\then L_{(x_1,u_1,p)}(x_2,u_2,p+\zeta)-L_{(x_1,u_1,p)}(x_3,u_3,p)\geq {\mu\over 4}\zeta^2+\omega(k,\eps).
\end{equation}
Secondly, for every $\zeta\in\RR$ we have
\begin{eqnarray*}
L_{(x_1,u_1,p)}(x_2,u_2,p+\zeta)\hskip-1mm-\hskip-1mmL_{(x_1,u_1,p)}(x_3,u_3,p)\hskip-3mm&\geq&\hskip-3mm-|L_{(x_1,u_1,p)}(x_2,u_2,p+\zeta)\hskip-1mm-\hskip-1mmL_{(x_1,u_1,p)}(x_1,u_1,p+\zeta)|\\
&&\hskip-3mm+L_{(x_1,u_1,p)}(x_1,u_1,p+\zeta)\hskip-1mm-\hskip-1mmL_{(x_1,u_1,p)}(x_1,u_1,p)\\
&&\hskip-3mm-|L_{(x_1,u_1,p)}(x_1,u_1,p)-L_{(x_1,u_1,p)}(x_3,u_3,p)|.
\end{eqnarray*}
Let ${C}_1(k)>0$ and ${\alpha}(k)>0$ be given by (H$_1$) with $G=K_0\times[-(k+M(k)),k+M(k)]$ where $K_0$ is given by \eqref{Def-Of-the-compact-K0}. Then, for every $\zeta\in\RR$ such that $|\zeta|\leq M(k)$ we have
\begin{eqnarray*}
L_{(x_1,u_1,p)}(x_2,u_2,p+\zeta)-L_{(x_1,u_1,p)}(x_3,u_3,p)&\hskip-1mm\geq\hskip-1mm&-2{C}_1(k)\eps^{{\alpha}(k)}\\
&&\hskip-1mm+L_{(x_1,u_1,p)}(x_1,u_1,p+\zeta)-L_{(x_1,u_1,p)}(x_1,u_1,p).
\end{eqnarray*}
On the other hand, by (H$_2$) we have
\begin{eqnarray*}
L_{(x_1,u_1,p)}(x_1,u_1,p+\zeta)-L_{(x_1,u_1,p)}(x_1,u_1,p)\hskip-2mm&=&\hskip-2mmL(x_1,u_1,p+\zeta)-L(x_1,u_1,p)-L_v(x_1,u_1,p)\zeta\\
&\geq&\hskip-2mm{\mu\over 2}\zeta^2,
\end{eqnarray*}
and so
$$
|\zeta|\leq M(k)\then L_{(x_1,u_1,p)}(x_2,u_2,p+\zeta)-L_{(x_1,u_1,p)}(x_3,u_3,p)\geq -2{C}_1(k)\eps^{{\alpha}(k)}+{\mu\over 2}\zeta^2.
$$
Consequently, setting $M_1:=\sqrt{2\over\mu}\sqrt{2C_1(k)\eps^{\alpha(k)}+\omega(k,\eps)}$ we can assert that
\begin{equation}\label{NeW-PrOoF-Eq2}
\big(|\zeta|\hskip-1mm\leq\hskip-1mm M(k)\hbox{ and }|\zeta|\hskip-1mm\geq\hskip-1mm M_1\big)\hskip-1mm\then \hskip-1mm L_{(x_1,u_1,p)}(x_2,u_2,p+\zeta)\hskip-1mm-\hskip-1mm L_{(x_1,u_1,p)}(x_3,u_3,p)\hskip-1mm\geq\hskip-1mm {\mu\over 4}\zeta^2\hskip-1mm+\hskip-1mm\omega(k,\eps).
\end{equation}
Combining \eqref{NeW-PrOoF-Eq1} with \eqref{NeW-PrOoF-Eq2} we see that  \eqref{First-SEt-ImplicatIOn-AssERtion} holds with $M=M_1$, and  \eqref{First-Lemma-Set1-EqGoal1} follows.

For each $k\in[0,\infty[$ and each $\eps\in[0,\widehat{\delta}_0]$  we consider $J_{k,\eps}\subset[0,\infty[$ defined as the set of all $M=|L_{(x_1,u_1,p_1)}(x_2,u_2,p_2)-L_{(x_1,u_1,p_1)}(x_3,u_3,p_3)+\omega(k,\eps)|$ for which $x_1,x_2,x_3\in[a,b]$, $u_1,u_2,u_3\in\RR$ and $p_1,p_2,p_3\in\RR$ are such that
\begin{equation}\label{Def-of-set2-of-First-Lemma-EqUatioN2}
\left\{
\begin{array}{l}
(x_1,u_1)\in K; \\
|x_1-x_i|+|u_1-u_i|\leq\eps\hbox{ for }i=2,3;\\ 
|p_i|\leq k\hbox{ for }i=1,2,3;\\
|p_1-p_i|\leq\Delta_1(k,\eps)\hbox{ for }i=2,3.
\end{array}
\right.
\end{equation}
Let $\Delta_2:[0,\infty[\times\left[0,\widehat{\delta}_0\right]\to[0,\infty[$ be defined by
$$
\Delta_2(k,\eps):=\max J_{k,\eps}.
$$
It is clear that $\Delta_2$ is increasing in both arguments and $\Delta_2(k,0)=0$ for all $k\in[0,\infty[$. We claim that there exists $C_2:[0,\infty[\to]0,\infty[$ such that
\begin{equation}\label{First-Lemma-Goal-EqUAtION2}
\Delta_2(k,\eps)\leq C_1(k)\big(\eps+\Delta_1(k,\eps)\big)^{\alpha(k)}+2C_2(k)\Delta_1(k,\eps)+\omega(k,\eps)
\end{equation}
for all $k\in[0,\infty[$ and all $\eps\in[0,\widehat{\delta}_0]$. Indeed, fix any $k\in[0,\infty[$ and any $\eps\in[0,\widehat{\delta}_0]$. It is easy to see that for each $x_1,x_2,x_3\in[a,b]$, each $u_1,u_2,u_3\in\RR$ and each $p_1,p_2,p_3\in\RR$ satisfying \eqref{Def-of-set2-of-First-Lemma-EqUatioN2}, we have
\begin{eqnarray*}
|L_{(x_1,u_1,p_1)}(x_2,u_2,p_2)-L_{(x_1,u_1,p_1)}(x_3,u_3,p_3)+\omega(k,\eps)|&\leq&|L(x_2,u_2,p_2)-L(x_3,u_3,p_3)|\\
&&+|L_v(x_1,u_1,p_1)||p_2-p_3|\\
&&+\omega(k,\eps).
\end{eqnarray*}
But $|L(x_2,u_2,p_2)-L(x_3,u_3,p_3)|\leq C_1(k)(\eps+\Delta_1(k,\eps))^{\alpha(k)}$ because $L$ is $\alpha(k)$-H\"older continuous on the compact $K_0\times[-(k+M(k)),k+M(k)]$ and $(x_2,u_2,p_2), (x_3,u_3,p_3)\in K_0\times[-k,k]$. Moreover, using the continuity of $L_v$ on the compact $K\times[-k,k]$  we deduce that there exists $C_2(k)>0$ such that $|L_v(x,u,p)|\leq C_2(k)$ for all $(x,u,p)\in K\times[-k,k]$. Consequently, \eqref{First-Lemma-Goal-EqUAtION2} follows since $|p_2-p_3|\leq|p_2-p_1|+|p_1-p_3|\leq 2\Delta_1(k,\eps)$.

\smallskip

\paragraph{\bf Step 2: constructing \boldmath$\delta$\unboldmath\  satisfying (P\boldmath$_1$\unboldmath) and (P\boldmath$_2\unboldmath$)} From \eqref{First-Lemma-Set1-EqGoal1} and \eqref{First-Lemma-Goal-EqUAtION2} it is easily seen that there exists $C_3:[0,\infty[\to]0,\infty[$ such that:
\begin{eqnarray}
&&\hskip-10mm\Delta_1(k,\eps)\leq C_3(k)\left[\left(\sqrt{\eps}\right)^{\alpha(k)}+\sqrt{\omega(k,\eps)}\right];\label{EsTImAtE-1}\\
&&\hskip-10mm\Delta_2(k,\eps)\leq C_3(k)\left[\left(\sqrt{\eps}\right)^{\min\{\alpha(k), \alpha^2(k)\}}+\left(\sqrt{\omega(k,\eps)}\right)^{\alpha(k)}+\sqrt{\omega(k,\eps)}+\omega(k,\eps)\right]\label{EsTImAtE-2}
\end{eqnarray}
for all $k\in[0,\infty[$ and all $\eps\in[0,\widehat{\delta}_0]$. Let $\Delta:[0,\infty[\times[0,\delta_0]\to[0,\infty[$ be defined by
\begin{equation}\label{DefiNItION-Of-DeLtA-for-PrOOf-Of-Lemma-3}
\Delta(k,\eps):=\max\left\{\Delta_1\left(k,\eps+N\sqrt{\eps}\right),{2\over\sqrt{\mu}}\sqrt{\Delta_2\left(k,\eps+N\sqrt{\eps}\right)}\right\}
\end{equation}
(with $N>0$ given by Remark \ref{Preliminary-Remark}). It is clear that $\Delta$ is increasing in both arguments and $\Delta(k,0)=0$ for all $k\in[0,\infty[$. From \eqref{EsTImAtE-1} and \eqref{EsTImAtE-2} we see that there exists $C_4:[0,\infty[\to]0,\infty[$ such that 
\begin{eqnarray}\label{EsTImAtE-3}
\Delta(k,\eps)\leq C_4(k)\left[\left(\sqrt{\eps}\right)^{{1\over 4}\min\{\alpha(k), \alpha^2(k)\}}+\overline{\omega}(k,\eps)\right]
\end{eqnarray}
for all $k\in[0,\infty[$ and all $\eps\in[0,\delta_0]$, where $\overline{\omega}:[0,\infty[\times[0,\infty[\to[0,\infty[$ is given by \eqref{def-of-omega-alpha}.

For each $k\in[0,\infty[$ and each $\eps\in[0,{\delta_0\over \exp}]$ we consider $A_{k,\eps}\subset [0,\infty[$ given by 
$$
A_{k,\eps}:=\left\{\eta\geq 0:4\int_0^{\exp \eps}\Delta(k+\eta,\xi){d\xi\over \xi}\leq\eta\right\}.
$$
Let $\delta:[0,\infty[\times\left[0,{\delta_0\over \exp}\right]\to[0,\infty]$ be defined by
$$
\delta(k,\eps):=\inf A_{k,\eps}.
$$
It is clear that $\delta$ is increasing in both arguments and $\delta(k,0)=0$ for all $k\in[0,\infty[$. So (P$_1$) is satisfied. Fix $\eta_0>0$. Given any $k\in[0,\infty[$, from (H$^{K,\delta_0}_{\omega}$) we have $\lim_{\eps\to0}\int_0^{\exp\eps}\overline{\omega}(k+\eta_0,\zeta){d\zeta\over\zeta}=0$, and so $\lim_{\eps\to0}4\int_0^{\exp\eps}\Delta(k+\eta_0,\zeta){d\zeta\over\zeta}=0$ by using \eqref{EsTImAtE-3}. Hence, there exists $\eps_0(k)\in\left]0,{\delta_0\over\exp}\right]$ such that $4\int_0^{\exp\eps}\Delta(k+\eta_0,\zeta){d\zeta\over\zeta}\leq\eta_0$ for any $\eps\in[0,\eps_0(k)]$. Using the fact that $\Delta(\cdot,\xi)$ is increasing, it follows that 
$$
\Delta\left(k+4\int_0^{\exp\eps}\Delta(k+\eta_0,\zeta){d\zeta\over\zeta},\xi\right)\leq \Delta(k+\eta_0,\xi)
$$
for all $\xi\in\left[0,\exp\eps\right]$, hence 
$$
4\int_0^{\exp\eps}\Delta\left(k+4\int_0^{\exp\eps}\Delta(k+\eta_0,\zeta){d\zeta\over\zeta},\xi\right){d\xi\over\xi}\leq 4\int_0^{\exp\eps}\Delta(k+\eta_0,\xi){d\xi\over\xi},
$$
and consequently $4\int_0^{\exp\eps}\Delta(k+\eta_0,\xi){d\xi\over\xi}\in A_{k,\eps}$. Thus
$$
\delta(k,\eps)\leq 4\int_0^{\exp\eps}\Delta(k+\eta_0,\xi){d\xi\over\xi},
$$ 
for all $k\in[0,\infty[$ and all $\eps\in[0,\eps_0(k)]$, and (P$_2$) follows by using again \eqref{EsTImAtE-3} together with (H$^{K,\delta_0}_{\omega}$). 

\begin{remark}\label{L'InFImUM-est-AttEinT}
From the above we see that, under (H$^{K,\delta_0}_{\omega}$), we have $A_{k,\eps}\not=\emptyset$ for all $k\in[0,\infty[$ and all $\eps\in[0,\eps_0(k)]$.  Given $k\in[0,\infty[$ and $\eps\in[0,\eps_0(k)]$ there exists $\{\delta_n\}_{n\geq 1}\subset A_{k,\eps}$ such that $\lim_{n\to\infty}\delta_n=\delta(k,\eps)$ and $\delta(k,\eps)\leq\delta_n$ for all $n\geq 1$. Hence
\begin{equation}\label{Remark-Proof-MT-EquATiOn1}
4\int_0^{\exp\eps}\Delta(k+\delta_n,\xi){d\xi\over\xi}\leq\delta_n\hbox{ for all }n\geq 1,
\end{equation}
and, since $\Delta(\cdot,\xi)$ is increasing,
\begin{eqnarray}\label{Remark-Proof-MT-EquATiOn2}
&& \Delta(k+\delta(k,\eps),\xi)\leq \Delta(k+\delta_n,\xi)\hbox{ for all }n\geq 1\hbox{ and all } \xi\in[0,\exp\eps].
\end{eqnarray}
Using Fatou's lemma, from \eqref{Remark-Proof-MT-EquATiOn1} we have
$$
4\int_0^{\exp\eps}\liminf_{n\to\infty}\Delta(k+\delta_n,\xi){d\xi\over\xi}\leq\liminf_{n\to\infty}4\int_0^{\exp\eps}\Delta(k+\delta_n,\xi){d\xi\over\xi}\leq\lim_{n\to\infty}\delta_n=\delta(k,\eps).
$$
But, from \eqref{Remark-Proof-MT-EquATiOn2} we see that 
$$
 \Delta(k+\delta(k,\eps),\xi){1\over\xi}\leq\liminf_{n\to\infty}\Delta(k+\delta_n,\xi){1\over\xi}\hbox{ for all }\xi\in]0,\exp\eps],
$$
hence
$$
 4\int_0^{\exp\eps}\Delta(k+\delta(k,\eps),\xi){d\xi\over\xi}\leq4\int_0^{\exp\eps}\liminf_{n\to\infty}\Delta(k+\delta_n,\xi){d\xi\over\xi}\leq \delta(k,\eps).
 $$
Thus $\delta(k,\eps)\in A_{k,\eps}$. Consequently, $\delta(k,\eps)=\inf A_{k,\eps}=\min A_{k,\eps}$ for all $k\in[0,\infty[$ and all $\eps\in[0,\eps_0(k)]$.
\end{remark}

\paragraph{\bf Step 3: proving (P\boldmath$_3$\unboldmath)} Let $u\in\mathcal{L}_{\omega}(L,K,c,\delta_0)$ and let $x_1,x_2\in[a,b]$ be such that $0<x_2-x_1\leq{\delta_0\over\exp}$. For simplicity of notation, set:
$$
\left\{
\begin{array}{l}
\eps:=|x_1-x_2|; \\
u_1:=u(x_1);\\ 
k_1:=k_u(x_1,x_2).
\end{array}
\right.
$$
Let $T\subset[x_1,x_2]$ be given by
$$
T:=\Big\{x\in[x_1,x_2]:\big|u^\prime(x)-k_1\big|\geq\Delta_1\left(|k_1|,\eps+N\sqrt{\eps}\right)\Big\}.
$$
We claim that 
\begin{equation}\label{ClAIm-Final-1}
\int_T|u^\prime(x)-k_1|^2dx\leq {4\over\mu}\Delta_2\left(|k_1|,\eps+N\sqrt{\eps}\right)\eps.
\end{equation}

Indeed, $|x-x_1|\leq\eps$, $|u(x)-u_1|\leq N\sqrt{\eps}$ and $|u_{x_1,x_2}(x)-u_1|\leq N\sqrt{\eps}$ for all $x\in[x_1,x_2]$, hence, using \eqref{First-SEt-ImplicatIOn-AssERtion} and the fact that $\omega(|k_1|,\cdot)$ is increasing, if $x\in T$ then
\begin{eqnarray*}
 L_{(x_1,u_1,k_1)}(x,u(x),u^\prime(x))-L_{(x_1,u_1,k_1)}(x,u_{x_1,x_2}(x),k_1)&\geq&{\mu\over 4}|u^\prime(x)-k_1|^2+\omega(|k_1|,\eps+N\sqrt{\eps})\\
 &\geq&{\mu\over 4}|u^\prime(x)-k_1|^2+\omega(|k_1|,\eps),
\end{eqnarray*}
and so 
\begin{eqnarray}\label{ClAIm-Final-1-Eq1}
&&\hskip-19mm L_{(x_1,u_1,k_1)}(x,u_{x_1,x_2}(x),k_1)- L_{(x_1,u_1,k_1)}(x,u(x),u^\prime(x))+\omega(|k_1|,\eps)\leq- {\mu\over 4}|u^\prime(x)-k_1|^2.
\end{eqnarray}
On the other hand, using again the fact that $\omega(|k_1|,\cdot)$ is increasing, if $x\in[x_1,x_2]\setminus T$ then
\begin{eqnarray}\label{ClAIm-Final-1-Eq2}
&&\hskip-19mm L_{(x_1,u_1,k_1)}(x,u_{x_1,x_2}(x),k_1)- L_{(x_1,u_1,k_1)}(x,u(x),u^\prime(x))+\omega(|k_1|,\eps)\leq \Delta_2(|k_1|,\eps+N\sqrt{\eps}).
\end{eqnarray}
But \eqref{Obstacle-Problem-Fund-Equation} holds because $u\in\mathcal{L}_{\omega}(L,K,c,\delta_0)$, which means that
$$
\int_{x_1}^{x_2}\left(L_{(x_1,u_1,k_1)}(x,u_{x_1,x_2}(x),k_1)- L_{(x_1,u_1,k_1)}(x,u(x),u^\prime(x))+\omega(|k_1|,\eps)\right)dx\geq 0,
$$
and consequently, using \eqref{ClAIm-Final-1-Eq1} and \eqref{ClAIm-Final-1-Eq2} we deduce that
$$
{\mu\over 4}\int_T|u^\prime(x)-k_1|^2dx\leq \Delta_2(|k_1|,\eps+N\sqrt{\eps})\eps,
$$
which gives \eqref{ClAIm-Final-1}. Let $\Omega_{x_1,x_2}\subset [x_1,x_2]$ given by
$$
\Omega_{x_1,x_2}:=\Big\{x\in[x_1,x_2]:|u^\prime(x)-k_1\big|\geq\Delta\big(|k_1|,\eps\big)\Big\}.
$$
Then $\Omega_{x_1,x_2}\subset T$ by definition of $\Delta(|k_1|,\eps)$. From \eqref{ClAIm-Final-1} it follows that
$$
\int_T|u^\prime(x)-k_1|^2dx\leq \Delta^2\big(|k_1|,\eps\big)\eps.
$$
We have thus proved that for each $u\in\mathcal{L}_{\omega}(L,K,c,\delta_0)$ and each $y,z\in[a,b]$ with $0<z-y\leq{\delta_0\over \exp}$, one has the following inequality:
\begin{equation}\label{FundaMental-Estimate-Main-Lemma}
\int_{\Omega_{y,z}}\big|u^\prime(x)-k_u(y,z)\big|^2dx\leq \Delta^2\big(|k_u(y,z)|,|y-z|\big)|y-z|,
\end{equation}
where
\begin{equation}\label{FundaMental-Estimate-Main-Lemma-Set}
\Omega_{y,z}:=\Big\{x\in[y,z]:|u^\prime(x)-k_u(y,z)\big|\geq\Delta\big(|k_u(y,z)|,|y-z|\big)\Big\}.
\end{equation}

\smallskip

Finally, the property (P$_3$) follows from the following auxiliary lemma whose proof can be extracted from \cite{sychev92}. (For the convenience of the reader, the proof of Lemma \ref{Auxiliary-Lemma-For-Main-LeMmA} is given below.)

\begin{lemma}\label{Auxiliary-Lemma-For-Main-LeMmA}
Let $u\in W^{1,1}([a,b])$ be such that \eqref{FundaMental-Estimate-Main-Lemma} is satisfied for all $y,z\in[a,b]$ with $0<z-y\leq{\delta_0\over\exp}$. Then $u$ satisfies {\rm (P$_3$)}.
\end{lemma}

\smallskip

But we have proved that every $u\in\mathcal{L}_{\omega}(L,K,c,\delta_0)$ verified  \eqref{FundaMental-Estimate-Main-Lemma} for all $y,z\in[a,b]$ such that $0<z-y\leq{\delta_0\over\exp}$. Consequently (P$_3$) is satisfied for all $u\in\mathcal{L}_{\omega}(L,K,c,\delta_0)$, and the proof of Lemma \ref{Main-Lemma} is complete.
\end{proof}

\begin{proof}[\bf Proof of Lemma \ref{Auxiliary-Lemma-For-Main-LeMmA}]
Let $x_1,x_2\in[a,b]$ be such that $0<x_2-x_1\leq{\delta_0\over\exp}$ and let $s,t\in]x_1,x_2[$ be such that $s<t$. We have to prove \eqref{Regu-Fond-Inequality}, i.e., 
$$
\left|k_u(x_1,x_2)-k_u(s,t)\right|\leq\delta\big(\left|k_u(x_1,x_2)\right|,|x_1-x_2|\big).
$$
For simplicity of notation, we set $k:=k_u(x_1,x_2)$ and $\eps:=|x_1-x_2|$ and, without loss of generality, we assume that $\delta(|k|,\eps)<\infty$. Let $\{I_n:=[x^n_1,x^n_2]\}_{n\geq 1}$ be a decreasing sequence of intervals such that
\begin{eqnarray}\label{Auxiliary-Lemma-For-Main-LeMmA-EQ1}
&&\left\{\begin{array}{l}I_1=[x_1,x_2]\\ I_{n_0+1}\subset [s,t]\subset I_{n_0}\hbox{ for some }n_0\geq 1,\end{array}\right.
\end{eqnarray}
and, setting $\eps_n:=|x^n_1-x^n_2|$ for all $n\geq 1$,
\begin{eqnarray}\label{Auxiliary-Lemma-For-Main-LeMmA-EQ2}
\left\{\begin{array}{l}\eps_1=\eps\\
\eps_{n+1}={1\over\exp}\eps_{n}\hbox{, i.e., }\exp\eps_{n+1}=\eps_n,\hbox{ for all }n\geq 1.\end{array}\right.
\end{eqnarray}
(From \eqref{Auxiliary-Lemma-For-Main-LeMmA-EQ2} we see that $\eps_n={1\over \exp^{n-1}}\eps$ for all $n\geq 1$, and so $\lim_{n\to\infty}\eps_n=0$ with $\eps_1>\eps_2>\cdots>\eps_n>\eps_{n+1}>\cdots$.)

Taking Remark \ref{L'InFImUM-est-AttEinT} into account and noticing that $\cupp_{n=1}^\infty[\exp\eps_{n+1},\exp\eps_n]=\cupp_{n=1}^\infty[\eps_{n},\exp\eps_n]=]0,\exp\eps]$, we see that \begin{eqnarray*}
\delta(|k|,\eps)\geq 4\int_0^{\exp \eps}\Delta(|k|+\delta(|k|,\eps),\xi){d\xi\over \xi}&=& 4\int_{\cupp\limits_{n=1}^\infty[\eps_{n},\exp\eps_n]}\Delta(|k|+\delta(|k|,\eps),\xi){d\xi\over \xi}\\
&=&4\sum_{n=1}^\infty\int_{\eps_{n}}^{\exp\eps_n}\Delta(|k|+\delta(|k|,\eps),\xi){d\xi\over \xi}.
\end{eqnarray*}
Using the fact that $\Delta:[0,\infty[\times[0,\delta_0]\to[0,\infty[$ defined by \eqref{DefiNItION-Of-DeLtA-for-PrOOf-Of-Lemma-3} is increasing in both arguments, it follows that
\begin{eqnarray}
\delta(|k|,\eps)&\geq&4\sum_{n=1}^\infty\Delta(|k|+\delta(|k|,\eps),\eps_n)\int_{\eps_{n}}^{\exp\eps_n}{d\xi\over \xi}\nonumber\\
&=&4\sum_{n=1}^\infty\Delta(|k|+\delta(|k|,\eps),\eps_n)\big[\ln(\exp\eps_n)-\ln(\eps_{n})\big]\nonumber\\
&=&4\sum_{n=1}^\infty\Delta(|k|+\delta(|k|,\eps),\eps_n).\label{FundamenTal-AssertION-Which-iS-proVed-by-Induction-EQUaTion-00}
\end{eqnarray}
Thus, to prove \eqref{Regu-Fond-Inequality} it is sufficient to establish the following assertion:
\begin{equation}\label{FundamenTal-AssertION-Which-iS-proVed-by-Induction}
\forall m\geq 1\ \mathcal{P}(m)
\end{equation}
with $\mathcal{P}(m)$ given by
$$
\forall x_1\leq \sigma<\tau\leq x_2\  \Big[I_{m+1}\subset[\sigma,\tau]\subset I_m\then|k-k_u(\sigma,\tau)|\leq 4\sum_{n=1}^m\Delta(|k|+\delta(|k|,\eps),\eps_n)\Big].
$$ 
Indeed, applying \eqref{FundamenTal-AssertION-Which-iS-proVed-by-Induction} with $m=n_0$, $\sigma=s$ and $\tau=t$, we have
$$
|k-k_u(s,t)|\leq 4\sum_{n=1}^{n_0}\Delta(|k|+\delta(|k|,\eps),\eps_n),
$$
and \eqref{Regu-Fond-Inequality} follows by using \eqref{FundamenTal-AssertION-Which-iS-proVed-by-Induction-EQUaTion-00} together with the fact that $4\sum_{n=1}^{n_0}\Delta(|k|+\delta(|k|,\eps),\eps_n)\leq 4\sum_{n=1}^{\infty}\Delta(|k|+\delta(|k|,\eps),\eps_n)$.
\end{proof}

\medskip

\paragraph{\bf Proof of \eqref{FundamenTal-AssertION-Which-iS-proVed-by-Induction}} We proceed by induction on $m$. 

\smallskip

\paragraph{\bf Step 1: base case} Assume that $\mathcal{P}(1)$ is false. Then, there exists $\sigma,\tau\in[x_1,x_2]$ with $\sigma<\tau$ such that $I_2\subset[\sigma,\tau]\subset I_1$ and $|k-k_u(\sigma,\tau)|> 4\Delta(|k|+\delta(|k|,\eps),\eps_1)$. But $|k-k_u(x_1,x_2)|=0$ (because $k=k_u(x_1,x_2)$) with $I_2\subset[x_1,x_2]=I_1$, so by continuity arguments (see Remark \ref{Continuity-Connected-Arguments}) we can assert that there exist $\bar\sigma,\bar\tau\in[x_1,x_2]$ with $\bar\sigma<\bar\tau$ such that:
\begin{eqnarray}
&&[x^2_1,x^2_2]=I_2\subset[\bar\sigma,\bar\tau]\subset I_1;\label{FundamenTal-AssertION-Which-iS-proVed-by-Induction-EQuATIoN1}
\\
&&\big|k-k_u(\bar\sigma,\bar\tau)\big|=4\Delta\big(|k|+\delta(|k|,\eps),\eps_1\big).\label{FundamenTal-AssertION-Which-iS-proVed-by-Induction-EQuATIoN2}
\end{eqnarray}

\begin{remark}\label{Continuity-Connected-Arguments}
The existence of $\bar\sigma,\bar\tau\in[x_1,x_2]$ with $\bar\sigma<\bar\tau$ satisfying \eqref{FundamenTal-AssertION-Which-iS-proVed-by-Induction-EQuATIoN1} and \eqref{FundamenTal-AssertION-Which-iS-proVed-by-Induction-EQuATIoN2} can be derived as follows. Let $\Psi:[x_1,\sigma]\times[\tau,x_2]\to[0,\infty[$ be defined by $\Psi(y,z):=|k-k_u(y,z)|$. Since $\Psi$ is continuous and $[x_1,\sigma]\times[\tau,x_2]$ is connected, $\Psi([x_1,\sigma]\times[\tau,x_2])$ is an interval. But $0=\Psi(x_1,x_2)\in \Psi([x_1,\sigma]\times[\tau,x_2])$, $\Psi(\sigma,\tau)\in\Psi([x_1,\sigma]\times[\tau,x_2])$ and $0\leq 4\Delta(|k|+\delta(|k|,\eps),\eps_1)<\Psi(\sigma,\tau)$, hence $4\Delta(|k|+\delta(|k|,\eps),\eps_1)\in \Psi([x_1,\sigma]\times[\tau,x_2])$. Consequently, there exist $(\bar\sigma,\bar\tau)\in[x_1,\sigma]\times[\tau,x_2]$ such that $\Psi(\bar\sigma,\bar\tau)=4\Delta(|k|+\delta(|k|,\eps),\eps_1)$, which is the desired conclusion. 
\end{remark}
Set:
\begin{enumerate}
\item[$\bullet$] $A:=\big\{x\in [x_1,x_2]:|u^\prime(x)-k|\geq 2\Delta(|k|+\delta(|k|,\eps),\eps_1)\big\}$;
\item[$\bullet$] $B:=\big\{x\in [\bar\sigma,\bar\tau]:|u^\prime(x)-k_u(\bar\sigma,\bar\tau)|\geq 2\Delta(|k|+\delta(|k|,\eps),\eps_1)\big\}$.
\end{enumerate}
Then 
\begin{equation}\label{FundamenTal-AssertION-Which-iS-proVed-by-Induction-EQuATIoN3}
[\bar\sigma,\bar\tau]\setminus B\subset A. 
\end{equation}
Indeed, if $x\in[\bar\sigma,\bar\tau]\setminus B$ then $x\in I_1=[x_1,x_2]$ by the right inclusion in \eqref{FundamenTal-AssertION-Which-iS-proVed-by-Induction-EQuATIoN1} and $|u^\prime(x)-k_u(\bar\sigma,\bar\tau)|< 2\Delta(|k|+\delta(|k|,\eps),\eps_1)$. But, we have
$$
\big|k-k_u(\bar\sigma,\bar\tau)\big|\leq\big|u^\prime(x)-k\big|+\big|u^\prime(x)-k_u(\bar\sigma,\bar\tau)\big|,
$$
hence, using \eqref{FundamenTal-AssertION-Which-iS-proVed-by-Induction-EQuATIoN2}, 
$$
4\Delta\big(|k|+\delta(|k|,\eps),\eps_1\big)\leq\big|u^\prime(x)-k\big|+2\Delta\big(|k|+\delta(|k|,\eps),\eps_1\big),
$$
and so $|u^\prime(x)-k|\geq2\Delta(|k|+\delta(|k|,\eps),\eps_1)$, which implies that $x\in A$. On the other hand, since $2\Delta\geq \Delta$, $|k|+\delta(|k|,\eps)\geq |k|$ and $\Delta(\cdot,\eps_1)$ is increasing, it is clear that $A\subset \Omega_{x_1,x_2}$, where $\Omega_{x_1,x_2}$ is defined by \eqref{FundaMental-Estimate-Main-Lemma-Set} with $y=x_1$ and $z=x_2$. Recalling that $\eps_1=\eps=|x_1-x_2|$ and using \eqref{FundaMental-Estimate-Main-Lemma} and the fact that $\Delta(\cdot,\eps_1)$ is increasing, we deduce that
\begin{eqnarray}
\int_A\big|u^\prime(x)-k\big|^2dx\leq\int_{\Omega_{x_1,x_2}}\big|u^\prime(x)-k\big|^2dx&\leq&\Delta^2\big(|k|,|x_1-x_2|\big)|x_1-x_2|\nonumber\\
&\leq&\Delta^2\big(|k|+\delta(|k|,\eps),\eps_1\big)|x_1-x_2|.\label{FundamenTal-AssertION-Which-iS-proVed-by-Induction-EQuATIoN4}
\end{eqnarray}
But, by definition of $A$, we have
\begin{equation}\label{FundamenTal-AssertION-Which-iS-proVed-by-Induction-EQuATIoN5}
4\Delta^2\big(|k|+\delta(|k|,\eps),\eps_1\big)|A|\leq \int_A\big|u^\prime(x)-k\big|^2dx,
\end{equation}
and so, combining \eqref{FundamenTal-AssertION-Which-iS-proVed-by-Induction-EQuATIoN4} with \eqref{FundamenTal-AssertION-Which-iS-proVed-by-Induction-EQuATIoN5}, we obtain the following inequality:
\begin{equation}\label{FundamenTal-AssertION-Which-iS-proVed-by-Induction-EQuATIoN6}
|A|\leq{1\over 4}|x_1-x_2|.
\end{equation}
From \eqref{FundamenTal-AssertION-Which-iS-proVed-by-Induction-EQuATIoN2} it is easily seen that $|k_u(\bar\sigma,\bar\tau)|\leq |k|+4\Delta(|k|+\delta(|k|,\eps),\eps_1)$, hence $|k_u(\bar\sigma,\bar\tau)|\leq |k|+\delta(|k|,\eps)$ by \eqref{FundamenTal-AssertION-Which-iS-proVed-by-Induction-EQUaTion-00}, and so $\Delta(|k|+\delta(|k|,\eps),\eps_1)\geq\Delta(|k_u(\bar\sigma,\bar\tau)|,|\bar\sigma-\bar\tau|)$ because $\Delta$ is increasing in both arguments. Thus $B\subset\Omega_{\bar\sigma,\bar\tau}$, where $\Omega_{\bar\sigma,\bar\tau}$ is defined by \eqref{FundaMental-Estimate-Main-Lemma-Set} with $y=\bar\sigma$ and $z=\bar\tau$. Using \eqref{FundaMental-Estimate-Main-Lemma} it follows that
\begin{eqnarray*}
\int_B\big|u^\prime(x)-k_u(\bar\sigma,\bar\tau)\big|^2dx\leq\int_{\Omega_{\bar\sigma,\bar\tau}}\big|u^\prime(x)-k_u(\bar\sigma,\bar\tau)\big|^2dx&\leq&\Delta^2\big(|k_u(\bar\sigma,\bar\tau)|,|\bar\sigma-\bar\tau|\big)|\bar\sigma-\bar\tau|\\
&\leq&\Delta^2\big(|k|+\delta(|k|,\eps),\eps_1\big)|\bar\sigma-\bar\tau|,
\end{eqnarray*}
which combined with that fact that by definition of $B$ we have
$$
4\Delta^2\big(|k|+\delta(|k|,\eps),\eps_1\big)|B|\leq \int_B\big|u^\prime(x)-k_u(\bar\sigma,\bar\tau)\big|^2dx,
$$
gives  the following inequality:
\begin{equation}\label{FundamenTal-AssertION-Which-iS-proVed-by-Induction-EQuATIoN7}
|B|\leq{1\over 4}|\bar\sigma-\bar\tau|.
\end{equation}
From \eqref{FundamenTal-AssertION-Which-iS-proVed-by-Induction-EQuATIoN3} we see that $|\bar\sigma-\bar\tau|\leq |A|+|B|$, hence $|\bar\sigma-\bar\tau|\leq {1\over 4}|x_1-x_2|+{1\over 4}|\bar\sigma-\bar\tau|$ by using \eqref{FundamenTal-AssertION-Which-iS-proVed-by-Induction-EQuATIoN6} and \eqref{FundamenTal-AssertION-Which-iS-proVed-by-Induction-EQuATIoN7}, and so $3|\bar\sigma-\bar\tau|\leq |x_1-x_2|$. On the other hand, from the left inclusion in \eqref{FundamenTal-AssertION-Which-iS-proVed-by-Induction-EQuATIoN1} we see that $|I_2|=|x^2_1-x^2_2|\leq |\bar\sigma-\bar\tau|$, hence ${1\over \exp}|x_1-x_2|\leq  |\bar\sigma-\bar\tau|$, i.e., $|x_1-x_2|\leq\exp|\bar\sigma-\bar\tau|$, by \eqref{Auxiliary-Lemma-For-Main-LeMmA-EQ2}. It follows that $3|\bar\sigma-\bar\tau|\leq \exp|\bar\sigma-\bar\tau|$, which is impossible.

\smallskip

\paragraph{\bf Step 2: induction} Let $m\geq 1$ be such that $\mathcal{P}(m)$ is true. Then, as $I_{m+1}=[x^{m+1}_1,x^{m+1}_2]\subset I_m$ we can apply $\mathcal{P}(m)$ with $[\sigma,\tau]=I_{m+1}$, and we have
$$
\big|k-k_u(x^{m+1}_1,x^{m+1}_2)\big|\leq4\sum_{n=1}^m\Delta\big(|k|+\delta(|k|,\eps),\eps_n\big).
$$
Thus, it is easily seen that for proving that $\mathcal{P}(m+1)$ is true, it is sufficient to establish that 
$$
\forall x_1\hskip-0.5mm\leq \hskip-0.5mm\sigma<\hskip-0.5mm\tau\hskip-0.5mm\leq \hskip-0.5mmx_2\  \hskip-0.5mm\Big[\hskip-0.5mmI_{m+2}\hskip-0.5mm\subset\hskip-0.5mm[\sigma,\tau]\hskip-0.5mm\subset I_{m+1}\hskip-0.5mm\then\hskip-0.5mm|k_u(x^{m+1}_1,x^{m+1}_2)-k_u(\sigma,\tau)|\hskip-0.5mm\leq\hskip-0.5mm 4\Delta(|k|+\delta(|k|,\eps),\eps_{m+1})\hskip-0.5mm\Big].
$$ 
Assume that this latter assertion is false. Then, arguing as in Step 1, we can assert that there exist $\bar\sigma,\bar\tau\in[x^{m+1}_1,x^{m+1}_2]$ with $\bar\sigma<\bar\tau$ such that:
\begin{enumerate}
 \item[$\bullet$] $I_{m+2}\subset[\bar\sigma,\bar\tau]\subset I_{m+1}$; 
  \item[$\bullet$] $\big|k_u(x^{m+1}_1,x^{m+1}_2)-k_u(\bar\sigma,\bar\tau)\big|= 4\Delta(|k|+\delta(|k|,\eps),\eps_{m+1})$.
 \end{enumerate}
Set:
\begin{enumerate}
 \item[$\bullet$] $A_{m+1}:=\big\{x\in [x^{m+1}_1,x^{m+1}_2]:|u^\prime(x)-k_u(x^{m+1}_1,x^{m+1}_2)|\geq 2\Delta(|k_u(x^{m+1}_1,x^{m+1}_2)|+\delta(|k|,\eps),\eps_{m+1})\big\}$; 
  \item[$\bullet$] $B_{m+1}:=\big\{x\in [\bar\sigma,\bar\tau]:|u^\prime(x)-k_u(\bar\sigma,\bar\tau)|\geq 2\Delta(|k_u(x^{m+1}_1,x^{m+1}_2)|+\delta(|k|,\eps),\eps_{m+1})\big\}$.
 \end{enumerate}
 Using the same method as in Step 1, we can establish that $|A_{m+1}|\leq {1\over 4}|x^{m+1}_1-x^{m+1}_2|$, $|B_{m+1}|\leq {1\over 4}|\bar\sigma-\bar\tau|$ and $[\bar\sigma,\bar\tau]\setminus B_{m+1}\subset A_{m+1}$. This latter inclusion implies that $|\bar\sigma-\bar\tau|\leq |A_{m+1}|+|B_{m+1}|$, and so $3|\bar\sigma-\bar\tau|\leq |x^{m+1}_1-x^{m+1}_2|$. On the other hand, as $[x^{m+2}_1,x^{m+2}_2]=I_{m+2}\subset[\bar\sigma,\bar\tau]$ we have $|x^{m+2}_1-x^{m+2}_2|\leq |\bar\sigma-\bar\tau|$, hence ${1\over \exp}|x^{m+1}_1-x^{m+1}_2|\leq  |\bar\sigma-\bar\tau|$, i.e., $|x^{m+1}_1-x^{m+1}_2|\leq\exp|\bar\sigma-\bar\tau|$, by \eqref{Auxiliary-Lemma-For-Main-LeMmA-EQ2}. It follows that $3|\bar\sigma-\bar\tau|\leq \exp|\bar\sigma-\bar\tau|$, which is impossible.
\endproof

\medskip

\section{Proof of the general regularity theorem}

In this section we prove Theorem \ref{General-Regularity-Theorem} (see \S 4.2). 

\subsection{Auxiliary lemmas} To prove Theorem \ref{General-Regularity-Theorem} we need the following lemmas.

\begin{lemma}\label{Lemma-MRT-L1}
Under the hypotheses of Theorem {\rm\ref{General-Regularity-Theorem}}, if $u\in\mathcal{L}_\omega(L,K,c,\delta_0)$ has a finite derived number $d\in\RR$ at $\widebar{x}\in]a,b[$, i.e., there exist two sequences $\{s_n\}_{n\geq 1}$ and $\{t_n\}_{n\geq 1}$ with $s_n<\widebar{x}<t_n$ such that{\rm:}
\begin{eqnarray}
&& \lim_{n\to\infty} s_n=\lim_{n\to\infty} t_n=\widebar{x}; \label{Lemma-MRT-L1-EqUatION-1}\\
&& \lim_{n\to\infty} k_u(s_n,t_n)=d,\label{Lemma-MRT-L1-EqUatION-2}
\end{eqnarray}
then $u$ is differentiable at $\widebar{x}$ and $u^\prime(x)=d$. 
\end{lemma}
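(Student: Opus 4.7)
The plan is to reduce the statement to property (P$_3$) of Lemma \ref{Main-Lemma}, which controls all interior slopes $k_u(s,t)$ of an interval $[x_1,x_2]$ in terms of the outer slope $k_u(x_1,x_2)$ and the length $|x_1-x_2|$, via the modulus $\delta$. The strategy is to take the bracketing pair $(s_n,t_n)$ supplied by the hypothesis as the outer interval; once $n$ is large, every subinterval of $[s_n,t_n]$ will have slope arbitrarily close to $d$, and in particular the two subintervals $[\widebar x,x]$ or $[x,\widebar x]$ for $x$ near $\widebar x$.

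First I would note that, since $k_u(s_n,t_n)\to d\in\RR$ and $s_n,t_n\to\widebar x$, we may assume for all sufficiently large $n$ that $|k_u(s_n,t_n)|\leq |d|+1$ and $0<t_n-s_n\leq \delta_0/\exp$. Combining the monotonicity of $\delta$ in its first argument from (P$_1$) with the vanishing $\delta(|d|+1,\varepsilon)\to 0$ as $\varepsilon\to 0$ from (P$_2$) yields
$$\delta\bigl(|k_u(s_n,t_n)|,|s_n-t_n|\bigr)\longrightarrow 0.$$
Fixing $\varepsilon>0$, I would then choose one such index $n_0$ for which both $|k_u(s_{n_0},t_{n_0})-d|$ and $\delta(|k_u(s_{n_0},t_{n_0})|,|s_{n_0}-t_{n_0}|)$ are less than $\varepsilon/2$, and apply (P$_3$) with $(x_1,x_2)=(s_{n_0},t_{n_0})$ together with the triangle inequality to conclude that
$$|k_u(s,t)-d|<\varepsilon\quad\text{for every }s,t\in[s_{n_0},t_{n_0}]\text{ with }s<t.$$

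Finally, setting $\eta:=\min\{\widebar x-s_{n_0},\,t_{n_0}-\widebar x\}>0$, any $x\in[a,b]$ with $0<|x-\widebar x|<\eta$ satisfies $\{\min(x,\widebar x),\max(x,\widebar x)\}\subset[s_{n_0},t_{n_0}]$, so the bound above specializes to
$$\left|\frac{u(x)-u(\widebar x)}{x-\widebar x}-d\right|<\varepsilon,$$
which is precisely the differentiability of $u$ at $\widebar x$ with $u^\prime(\widebar x)=d$. There is no genuine obstacle: the quantitative control of interior slopes by outer ones has already been absorbed into Lemma \ref{Main-Lemma}, and the present lemma is essentially a clean passage from ``there exists one bracketing pair whose slope converges to $d$'' to ``every bracketing pair has slope converging to $d$''.
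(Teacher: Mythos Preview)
Your proposal is correct and follows essentially the same route as the paper: both arguments apply the slope-control inequality (P$_3$) of Lemma~\ref{Main-Lemma} on the outer interval $[s_n,t_n]$, combine it with the triangle inequality and the convergence $k_u(s_n,t_n)\to d$, and invoke the vanishing $\delta(k,\eps)\to 0$ from (P$_2$) to conclude that all nearby difference quotients approach $d$. The only cosmetic difference is that the paper fixes a uniform bound $M$ on $|k_u(s_n,t_n)|$ and uses monotonicity to write $\delta(M,\eps_n)$, whereas you use $|d|+1$ for large $n$; both lead to the same conclusion.
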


\newtheorem*{Lemma-MRT-L1-Bis}{\bf Lemma \ref{Lemma-MRT-L1}-bis}

\begin{Lemma-MRT-L1-Bis}
{\em Under the hypotheses of Theorem {\rm\ref{General-Regularity-Theorem}}, if $u\in\mathcal{L}_\omega(L,K,c,\delta_0)$ has a finite derived number $d\in\RR$ at $a$ (resp. $b$), i.e., there exists a sequence  $\{t_n\}_{n\geq 1}$ (resp. $\{s_n\}_{n\geq 1}$) with $\widebar{x}<t_n$ (resp. $s_n<\widebar{x}$) such that{\rm:}
\begin{eqnarray}
&& \lim_{n\to\infty} t_n=\widebar{x}\quad\hbox{(resp. } \lim_{n\to\infty} s_n=\widebar{x}\hbox{)}; \label{Lemma-MRT-L1-EqUatION-1-BiS}\\
&& \lim_{n\to\infty} k_u(a,t_n)=d\quad\hbox{(resp. } \lim_{n\to\infty} k_u(s_n,b)=d\hbox{)},\label{Lemma-MRT-L1-EqUatION-2-BiS}
\end{eqnarray}
then $u$ is differentiable at $a$ (resp. $b$) and $u^\prime(a)=d$ (resp. $u^\prime(a)=d$).}
\end{Lemma-MRT-L1-Bis}

\begin{proof}[\bf Proof of Lemma \ref{Lemma-MRT-L1}]
By \eqref{Lemma-MRT-L1-EqUatION-2} there exists $M>0$ such that $|k_u(s_n,t_n)|\leq M$ for all $n\geq 1$. Setting $\eps_n:=|s_n-t_n|$ for all $n\geq 1$ (where, because of \eqref{Lemma-MRT-L1-EqUatION-1}, without loss of generality we can assume  that $\eps_n\leq{\delta\over\exp}$ for all $n\geq 1$) from Lemma \ref{Main-Lemma}, see \eqref{Regu-Fond-Inequality}, we have
$$
\left|k_u(s,t)-k_u(s_n,t_n)\right|\leq\delta(M,\eps_n)
$$
for all $s,t\in[s_n,t_n]$ with $s<t$ and all $n\geq 1$ (where  $\delta$ is given by Lemma \ref{Main-Lemma}). Thus
\begin{eqnarray}
\left|k_u(s,t)-d\right|&\leq& \left|k_u(s,t)-k_u(s_n,t_n)\right|+\left|k_u(s_n,t_n)-d\right|\nonumber\\
&\leq&\delta(M,\eps_n)+\left|k_u(s_n,t_n)-d\right|\label{Lemma-MRT-L1-EqUatION-3}
\end{eqnarray}
for all $s,t\in[s_n,t_n]$ with $s<t$ and all $n\geq 1$. But $\lim_{n\to\infty}\eps_n=0$ by \eqref{Lemma-MRT-L1-EqUatION-1} and so $\lim_{n\to\infty}\delta(M,\eps_n)=0$ by \eqref{Fundamental-Limit}. Moreover $\lim_{n\to\infty}|k_u(s_n,t_n)-d|=0$ by \eqref{Lemma-MRT-L1-EqUatION-2}. So, from \eqref{Lemma-MRT-L1-EqUatION-3} we  can deduce that
$$
\lim_{\Tiny\begin{array}{c} s,t\to\widebar{x}\\ s<\widebar{x}<t\end{array}}k_u(s,t)=d,
$$
which proves that $u$ is differentiable at $\widebar{x}$ and $u^\prime(\widebar{x})=d$.
\end{proof}

\smallskip

\begin{proof}[\bf Proof of Lemma \ref{Lemma-MRT-L1}-bis]
This follows by similar arguments as in the proof of Lemma \ref{Lemma-MRT-L1} by  using \eqref{Lemma-MRT-L1-EqUatION-1-BiS} and \eqref{Lemma-MRT-L1-EqUatION-2-BiS} instead of \eqref{Lemma-MRT-L1-EqUatION-1} and \eqref{Lemma-MRT-L1-EqUatION-2}.
\end{proof}

\begin{lemma}\label{Lemma-MRT-L2}
Under the hypotheses of Theorem {\rm\ref{General-Regularity-Theorem}}, for each $C>0$ and each $\eta>0$ there exists $\widehat{\delta}(C,\eta)>0$ such that for every $u\in \mathcal{L}_\omega(L,K,c,\delta_0)$, one has 
$$
\Big[|u^\prime(\widebar{x})|<C\hbox{ and }|x-\widebar{x}|<\widehat{\delta}(C,\eta)\Big]\then |u^\prime(x)-u^\prime(\widebar{x})|\leq\eta.
$$
\end{lemma}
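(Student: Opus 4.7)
The plan is to apply Lemma~\ref{Main-Lemma} (property (P$_3$)) to the interval $[\widebar{x},x]$ (assuming without loss of generality $\widebar{x}<x$) and to extract the desired bound by passing to the limit in difference quotients at each endpoint, exploiting the differentiability of $u$ at $\widebar{x}$ and at $x$.

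I would take $\widehat{\delta}\leq\delta_0/\exp$ so that (P$_3$) applies to $[x_1,x_2]=[\widebar{x},x]$: for every $s,t\in[\widebar{x},x]$ with $s<t$,
$$
\bigl|k_u(\widebar{x},x)-k_u(s,t)\bigr|\leq\delta\bigl(|k_u(\widebar{x},x)|,x-\widebar{x}\bigr).
$$
Specializing to $s=\widebar{x}$, $t\to\widebar{x}^+$, and then to $s\to x^-$, $t=x$, the differentiability at the endpoints gives $k_u(\widebar{x},t)\to u^\prime(\widebar{x})$ and $k_u(s,x)\to u^\prime(x)$, whence in the limit
$$
\max\bigl\{|k_u(\widebar{x},x)-u^\prime(\widebar{x})|,\ |k_u(\widebar{x},x)-u^\prime(x)|\bigr\}\leq \delta\bigl(|k_u(\widebar{x},x)|,x-\widebar{x}\bigr),
$$
and the triangle inequality gives the key estimate $|u^\prime(x)-u^\prime(\widebar{x})|\leq 2\delta(|k_u(\widebar{x},x)|,x-\widebar{x})$.

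To finish, I would control $|k_u(\widebar{x},x)|$ uniformly in $u$, $\widebar{x}$, $x$. Cauchy--Schwarz combined with the $L^2$-bound of Remark~\ref{Preliminary-Remark} gives $|k_u(\widebar{x},x)|\leq N/\sqrt{x-\widebar{x}}$, while the displayed estimate above combined with $|u^\prime(\widebar{x})|<C$ yields the implicit bound $|k_u(\widebar{x},x)|<C+\delta(N/\sqrt{x-\widebar{x}},x-\widebar{x})$. Provided $\sup_{0<\rho\leq\widehat{\delta}}\delta(N/\sqrt{\rho},\rho)$ stays bounded by some $K$ for $\widehat{\delta}$ small, this forces $|k_u(\widebar{x},x)|\leq C+K$. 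Plugging back, $|u^\prime(x)-u^\prime(\widebar{x})|\leq 2\delta(C+K,\widehat{\delta})$, and since $\delta(C+K,\cdot)\to 0$ by (P$_2$), a final shrinking of $\widehat{\delta}$ so that $2\delta(C+K,\widehat{\delta})\leq\eta$ completes the argument, with $\widehat{\delta}$ depending only on $C$, $\eta$, and the data.

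The main obstacle is the boundedness of $\delta(N/\sqrt{\rho},\rho)$ as $\rho\to 0$, which demands a quantitative balance between, on one hand, the growth of the constants $C(\cdot)$ and the behavior of the H\"older exponents $\alpha(\cdot)$ entering the explicit bound for $\delta$ in (P$_2$), and, on the other hand, the decay coming from small $\rho$. For endpoints $\widebar{x}\in\{a,b\}$, Lemma~\ref{Lemma-MRT-L1}-bis supplies the one-sided differentiability needed to carry out the same limiting argument.
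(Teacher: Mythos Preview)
Your plan has a genuine gap precisely where you flag it. The step ``Provided $\sup_{0<\rho\le\widehat\delta}\delta(N/\sqrt\rho,\rho)$ stays bounded by some $K$'' is not justified, and nothing in the hypotheses guarantees it. The only quantitative information on $\delta$ is the bound in (P$_2$), valid for $\eps\in[0,\eps_0(k)]$, with constants $C(k)$, exponent $\alpha(k)$, and threshold $\eps_0(k)$ all depending on $k$; when you feed in $k=N/\sqrt\rho\to\infty$ there is no control on any of these (in particular $\alpha(k)$ may shrink and $C(k)$ may blow up, and $\rho$ need not lie in $[0,\eps_0(N/\sqrt\rho)]$). Thus your bootstrap $|k_u(\widebar x,x)|<C+\delta(N/\sqrt\rho,\rho)$ does not yield a finite $K$, and the argument stalls. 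A secondary issue is that the lemma does not assume $u$ is differentiable at $x$; this is part of the conclusion (and is used in the proof of (d), where it is applied with $u'(x_0)=\infty$ to derive a contradiction). Your limiting step $k_u(s,x)\to u'(x)$ therefore presupposes what must be shown.

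The paper sidesteps the large-$k$ problem entirely by never inserting the crude Cauchy--Schwarz bound into the first argument of $\delta$. Instead it fixes $\widehat\delta$ so that $\delta(C+\eta,\widehat\delta)<\eta/2$ and then proves the a priori estimate $|k_u(x,\widebar x)-u'(\widebar x)|\le\eta/2$ by a connectedness/intermediate-value argument: if this failed, continuity of $y\mapsto k_u(y,\widebar x)$ on $[x,\widebar x)$ would produce a point $\widebar y$ with $|k_u(\widebar y,\widebar x)-u'(\widebar x)|=\eta/2$, hence $|k_u(\widebar y,\widebar x)|<C+\eta$, and then (P$_3$) applied on $[\widebar y,\widebar x]$ together with the choice of $\widehat\delta$ forces $|k_u(\widebar y,\widebar x)-u'(\widebar x)|<\eta/2$, a contradiction. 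This keeps the first argument of $\delta$ at the fixed level $C+\eta$ throughout. Once $|k_u(x,\widebar x)|<C+\eta$ is secured, a second application of (P$_3$) on $[x,\widebar x]$ bounds all $k_u(x,y)$, which via Lemma~\ref{Lemma-MRT-L1} (or \ref{Lemma-MRT-L1}-bis) yields differentiability at $x$ and the final bound $|u'(x)-u'(\widebar x)|\le\eta$.
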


\begin{proof}[\bf Proof of Lemma \ref{Lemma-MRT-L2}]
Let $C>0$ and $\eta>0$. By \eqref{Fundamental-Limit} we have
$$
\lim_{\eps\to 0}\delta(C+\eta,\eps)=0.
$$
So, we can assert that there exists $\eps_{C,\eta}\in]0,{\delta_0\over\exp}]$ such that
\begin{equation}\label{EQuATiOn-LeMMa-2-Eq0}
\delta\big(C+\eta,\eps_{C,\eta}\big)<{\eta\over 2}.
\end{equation}
Set $\widehat{\delta}(C,\eta):=\eps_{C,\eta}$ and fix $u\in \mathcal{L}_\omega(L,K,c,\delta_0)$. Let $x,\widebar{x}\in[a,b]$ be such that $|u^\prime(\widebar{x})|<C$ and $|x-\widebar{x}|<\widehat{\delta}(C,\eta)$ with, without loss of generality, $x<\widebar{x}$. We claim that
\begin{equation}\label{EQuATiOn-LeMMa-2-Eq0-1}
\big|k_u(x,\widebar{x})-u^\prime(\widebar{x})\big|\leq{\eta\over 2}.
\end{equation}
Indeed, otherwise we have
$
|k_u(x,\widebar{x})-u^\prime(\widebar{x})|>{\eta\over 2}.
$
By definition of $u^\prime(\widebar{x})$, there exists $x_1\in]x,\widebar{x}[$ such that 
\begin{equation}\label{EQuATiOn-LeMMa-2-Eq1}
\big|k_u(y,\widebar{x})-u^\prime(\widebar{x})\big|<{\eta\over 2}\hbox{ for all }y\in[x_1,\widebar{x}[. 
\end{equation}
Set:
\begin{enumerate}
\item[$\bullet$] $A:=\big\{y\in[x,\widebar{x}[:|k_u(y,\widebar{x})-u^\prime(\widebar{x})|<{\eta\over 2}\big\}$;
\item[$\bullet$] $B:=\big\{y\in[x,\widebar{x}[:|k_u(y,\widebar{x})-u^\prime(\widebar{x})|>{\eta\over 2}\big\}$.
\end{enumerate}
Then, from the above, $A\not=\emptyset$, $B\not=\emptyset$ and $A\cap B=\emptyset$. Moreover, both $A$ and $B$ are open in $[x,\widebar{x}[$. As $[x,\widebar{x}[$ is a connected set it follows that $[x,\widebar{x}[\setminus(A\cup B)\not=\emptyset$, and consequently there exists $\widebar{y}\in[x,\widebar{x}[$ such that
\begin{equation}\label{EQuATiOn-LeMMa-2-Eq2}
\big|k_u(\widebar{y},\widebar{x})-u^\prime(\widebar{x})\big|={\eta\over 2}.
\end{equation}
From Lemma \ref{Main-Lemma}, see \eqref{Regu-Fond-Inequality}, and the fact that $|\widebar{y}-\widebar{x}|\leq|x-\widebar{x}|<\widehat{\delta}(C,\eta)$ with $\widehat{\delta}(C,\eta)\leq{\delta_0\over\exp}$, we have
$$
\big|k_u(\widebar{y},\widebar{x})-k_u(y,\widebar{x})\big|\leq\delta\big(|k_u(\widebar{y},\widebar{x})|,|\widebar{y}-\widebar{x}|\big)\leq \delta\big(|k_u(\widebar{y},\widebar{x})|,\widehat{\delta}(C,\eta)\big)
$$
for all $y\in[\widebar{y},\widebar{x}[$ (where $\delta$ is given by Lemma \ref{Main-Lemma}), and letting $y\to\widebar{x}$ we obtain
$$
\big|k_u(\widebar{y},\widebar{x})-u^\prime(\widebar{x})\big|\leq\delta\big(|k_u(\widebar{y},\widebar{x})|,\widehat{\delta}(C,\eta)\big).
$$
But $|k_u(\widebar{y},\widebar{x})|\leq |u^\prime(\widebar{x})|+{\eta\over 2}<C+{\eta\over 2}<C+\eta$ by \eqref{EQuATiOn-LeMMa-2-Eq2},  hence
$$
\big|k_u(\widebar{y},\widebar{x})-u^\prime(\widebar{x})\big|\leq\delta\big(C+\eta,\widehat{\delta}(C,\eta)\big)
$$
because $\delta$ is increasing in both arguments. According to \eqref{EQuATiOn-LeMMa-2-Eq0} and \eqref{EQuATiOn-LeMMa-2-Eq2} we see that
$$
{\eta\over 2}=\big|k_u(\widebar{y},\widebar{x})-u^\prime(\widebar{x})\big|\leq\delta\big(C+\eta,\widehat{\delta}(C,\eta)\big)<{\eta\over 2},
$$
which is impossible. So, \eqref{EQuATiOn-LeMMa-2-Eq0-1} is proved. On the other hand, since $|k_u(x,\widebar{x})|<C+{\eta\over 2}<C+\eta$ by \eqref{EQuATiOn-LeMMa-2-Eq0-1}, and $|x-\widebar{x}|<\widehat{\delta}(C,\eta)$, taking \eqref{EQuATiOn-LeMMa-2-Eq0} into account and using again Lemma \ref{Main-Lemma} we have
\begin{equation}\label{EQuATiOn-LeMMa-2-Eq0-2}
\big|k_u(x,y)-k_u(x,\widebar{x})\big|\leq \delta\big(|k_u(x,\widebar{x})|,|x-\widebar{x}|\big)\leq\delta\big(C+\eta,\widehat{\delta}(C,\eta))<{\eta\over 2}\hbox{ for all }y\in]x,\widebar{x}].
\end{equation}
\begin{remark}
Since $|k_u(x,\widebar{x})|<C+{\eta\over 2}$, from \eqref{EQuATiOn-LeMMa-2-Eq0-2} it follows that $|k_u(x,y)|<C+\eta$ for all $y\in]x,\widebar{x}]$, which implies that $u$ has a finite derived number at $x$ in the sense of lemmas \ref{Lemma-MRT-L1} and \ref{Lemma-MRT-L1}-bis. Hence $u$ is differentiable at $x$.
\end{remark}
Letting $y\to x$ in \eqref{EQuATiOn-LeMMa-2-Eq0-2} we obtain
\begin{equation}\label{EQuATiOn-LeMMa-2-Eq0-1-1}
\big|u^\prime(x)-k_u(x,\widebar{x})\big|\leq {\eta\over 2}.
\end{equation}
From \eqref{EQuATiOn-LeMMa-2-Eq0-1} and \eqref{EQuATiOn-LeMMa-2-Eq0-1-1} we conclude that
$$
\big|u^\prime(x)-u^\prime(\widebar{x})\big|\leq\big|u^\prime(x)-k_u(x,\widebar{x})\big|+\big|k_u(x,\widebar{x})-u^\prime(\widebar{x})\big|\leq {\eta\over 2}+ {\eta\over 2}=\eta,
$$
and the proof is complete.
\end{proof}

\subsection{Proof of Theorem \ref{General-Regularity-Theorem}}

Let $u\in\mathcal{L}_\omega(L,K,c,\delta_0)$. Following Definition \ref{TPR-Def}, we have to prove the following assertions:
\begin{enumerate}
\item[{\rm (a)}] if $x_0\in\Omega_u$ then there exists a neighborhood $V_{x_0}$ of $x_0$ such that $V_{x_0}\subset\Omega_u$;
\item[{\rm (b)}] if $x_0\in\Omega_u$ then $\lim\limits_{x\to x_0} u^\prime(x)=u^\prime(x_0)$; 
\item[{\rm (c)}] if $x_0\not\in \Omega_u$ then $u^\prime(x_0)\in\{-\infty,\infty\}$;
\item[{\rm (d)}] if $u^\prime(x_0)=\infty$ (resp. $u^\prime(x_0)=-\infty$) then $\lim\limits_{x\to x_0} u^\prime(x)=\infty$ (resp. $\lim\limits_{x\to x_0} u^\prime(x)=\infty$),
\end{enumerate}
where $\Omega_u:=\big\{x\in[a,b]:u\hbox{ is differentiable at }x\big\}$.

\smallskip

\paragraph{\bf Proof of (a)} Let $x_0\in\Omega_u$. Without loss of generality we can assume that $x_0\in]a,b[$. (If $x_0=a$ (resp. $x_0=b$) the proof will follow by similar arguments by using 
$$
\lim_{\Tiny\begin{array}{c} x_2\to a\\ a<x_2\end{array}}k_u(a,x_2)=u^\prime(a)\quad \Big(\hbox{resp. }\lim_{\Tiny\begin{array}{c} x_1\to b\\ x_1<b\end{array}}k_u(x_1,b)=u^\prime(b)\Big)
$$
instead of \eqref{Eq-Proof-of-(a)-Equat-1} and Lemma \ref{Lemma-MRT-L1}-bis instead of Lemma \ref{Lemma-MRT-L1}.) Then
\begin{equation}\label{Eq-Proof-of-(a)-Equat-1}
\lim_{\Tiny\begin{array}{c} x_1,x_2\to x_0\\ x_1<x_0<x_2\end{array}}k_u(x_1,x_2)=u^\prime(x_0).
\end{equation}
From \eqref{Eq-Proof-of-(a)-Equat-1} we can assert that there exists $C>0$ such that
$
\big|k_u(x_1,x_2)\big|\leq C
$
for all $x_1,x_2\in V_{x_0}$ with $x_1<x_0<x_2$, where $V_{x_0}\subset]x_0-{\delta_0\over 2\exp},x_0+{\delta_0\over 2\exp}[$ is a neighborhood of $x_0$. Using Lemma \ref{Main-Lemma}, see \eqref{Regu-Fond-Inequality}, it follows that for every $x_1,x_2\in V_{x_0}$ with $x_1<x_0<x_2$ and every $s,t\in[x_1,x_2]$ with $s<t$, we have
$$
\left|k_u(x_1,x_2)-k_u(s,t)\right|\leq\delta\big(\left|k_u(x_1,x_2)\right|,|x_1-x_2|\big)\leq\delta(C,|x_1-x_2|)
$$
(with $\delta$ given by Lemma \ref{Main-Lemma}) which implies that
$$
\left|k_u(s,t)\right|\leq\delta(C,|x_1-x_2|)+\left|k_u(x_1,x_2)\right|\leq M
$$
with $M:=\delta(C,|x_1-x_2|)+C>0$. Fix any $\widebar{x}\in V_{x_0}$. Then, there is a neighborhood $V_{\widebar{x}}\subset V_{x_0}$ of $\widebar{x}$ such that $\left|k_u(s,t)\right|\leq M$ for all $s,t\in V_{\widebar{x}}$ with $s<\widebar{x}<t$, hence 
$$
d:=\liminf_{\Tiny\begin{array}{c} s,t\to\widebar{x}\\ s<\widebar{x}<t\end{array}}k_u(s,t)\in\RR,
$$
and consequently there exist two sequences $\{s_n\}_{n\geq 1}$ and $\{t_n\}_{n\geq 1}$ with $s_n<\widebar{x}<t_n$ verifying \eqref{Lemma-MRT-L1-EqUatION-1} and \eqref{Lemma-MRT-L1-EqUatION-2}, which implies that $u$ is differentiable at $\widebar{x}$ (and $u^\prime(\widebar{x})=d$), i.e., $\widebar{x}\in\Omega_u$, by using Lemma \ref{Lemma-MRT-L1}. \endproof

\smallskip

\paragraph{\bf Proof of (b)} Let $x_0\in\Omega_u$. By (a) there exists a neighborhood $V_{x_0}$ of $x_0$ such that $V_{x_0}\subset\Omega_u$. Without loss of generality we can assume that $x_0\in]a,b[$. (If $x_0=a$ (resp. $x_0=b$) the proof will be the same by considering that we have one sequence $\{x^n_2\}_{n\geq 1}$ (resp. $\{x^n_1\}_{n\geq 1}$) with $a<x^n_2$ (resp. $x^n_1<b$) instead of $x^n_1<x_0<x^n_2$ and by replacing ``$k_u(x^n_1,x^n_2)$" by ``$k_u(a,x^n_2)$" (resp. ``$k_u(x^n_1,b)$").) Let $\{x_n\}_{n\geq 1}\subset V_{x_0}$ be such that 
\begin{equation}\label{EquAtion-Proof-OF-(b)-Number1}
\lim_{n\to\infty}x_n=x_0.
\end{equation}
As $x_0\in\Omega_u$, taking \eqref{EquAtion-Proof-OF-(b)-Number1} into account, we can assert that there exist two sequences $\{x^n_1\}_{n\geq 1}$ and $\{x^n_2\}_{n\geq 1}$ with $x^n_1<x_0<x^n_2$  such that:
\begin{eqnarray}
&& x^n_1<x_n<x^n_2 \hbox{ for all }n\geq 1; \label{EquAtion-Proof-OF-(b)-Number2}\\
&& \lim_{n\to\infty} x^n_1=\lim_{n\to\infty} x^n_2=x_0; \label{EquAtion-Proof-OF-(b)-Number3}\\
&& \lim_{n\to\infty} k_u(x^n_1,x^n_2)=u^\prime(x_0).\label{EquAtion-Proof-OF-(b)-Number4}
\end{eqnarray}
(Because of \eqref{EquAtion-Proof-OF-(b)-Number3} without loss of generality we can assume that $|x^n_1-x^n_2|\leq{\delta_0\over\exp}$ for all $n\geq 1$.) On the other hand, we have $x_n\in\Omega_u$ for all $n\geq 1$. Hence, taking \eqref{EquAtion-Proof-OF-(b)-Number2} into account, for each $n\geq 1$ there exist two sequences $\{s^n_j\}_{j\geq 1}$ and $\{t^n_j\}_{j\geq 1}$ with $s^n_j<x_n<t^n_j$ such that:
\begin{eqnarray}
&& s^n_j,t^n_j\in[x^n_1,x^n_2] \hbox{ for all }j\geq 1; \nonumber\\
&& \lim_{j\to\infty} s^n_j=\lim_{j\to\infty} t^n_j=x_n; \nonumber\\
&& \lim_{j\to\infty} k_u(s^n_j,t^n_j)=u^\prime(x_n).\label{EquAtion-Proof-OF-(b)-Number7}
\end{eqnarray}
Let $M>0$ be such that $|k_u(x^n_1,x^n_2)|\leq M$ for all $n\geq 1$ (such a positive constant $M$ exists because of \eqref{EquAtion-Proof-OF-(b)-Number4}). Using Lemma \ref{Main-Lemma}, see \eqref{Regu-Fond-Inequality}, we see that for every $n\geq 1$ and every $j\geq 1$,
\begin{eqnarray*}
\big|u^\prime(x_n)-u^\prime(x_0)\big|&=&\big|u^\prime(x_n)-k_u(s^n_j,t^n_j)+k_u(s^n_j,t^n_j)-k_u(x^n_1,x^n_2)+k_u(x^n_1,x^n_2)-u^\prime(x_0)\big|\\
&\leq&\big|u^\prime(x_n)-k_u(s^n_j,t^n_j)\big|+\big|k_u(s^n_j,t^n_j)-k_u(x^n_1,x^n_2)\big|+\big|k_u(x^n_1,x^n_2)-u^\prime(x_0)\big|\\
&\leq&\big|u^\prime(x_n)-k_u(s^n_j,t^n_j)\big|+\delta\big(|k_u(x^n_1,x^n_2)|,|x^n_1-x^n_2|\big)+\big|k_u(x^n_1,x^n_2)-u^\prime(x_0)\big|\\
&\leq&\big|u^\prime(x_n)-k_u(s^n_j,t^n_j)\big|+\delta\big(M,|x^n_1-x^n_2|\big)+\big|k_u(x^n_1,x^n_2)-u^\prime(x_0)\big|
\end{eqnarray*}
(with $\delta$ given by Lemma \ref{Main-Lemma}). Letting $j\to\infty$ and using \eqref{EquAtion-Proof-OF-(b)-Number7} and then letting $n\to\infty$ and using \eqref{EquAtion-Proof-OF-(b)-Number4} and \eqref{EquAtion-Proof-OF-(b)-Number3} together with \eqref{Fundamental-Limit}, we conclude that $\lim\limits_{n\to\infty}u^\prime(x_n)=u^\prime(x_0)$.
\endproof

\smallskip

\paragraph{\bf Proof of (c)} Let $x_0\not\in\Omega_u$. Without loss of generality we can assume that $x_0\in]a,b[$. (If $x_0=a$ (resp. $x_0=b$) the proof will follows by similar arguments by using 
$$
\liminf_{\Tiny\begin{array}{c} t\to a\\ a<t\end{array}}|k_u(a,t)|=\infty\quad \Big(\hbox{resp. }\liminf_{\Tiny\begin{array}{c} s\to b\\ s<b\end{array}}|k_u(s,b)|=\infty\Big)
$$
instead of \eqref{Eq-Proof-of-(a)-Equat-1-bis} and Lemma \ref{Lemma-MRT-L1}-bis instead of Lemma \ref{Lemma-MRT-L1}.) Then
\begin{equation}\label{Eq-Proof-of-(a)-Equat-1-bis}
\liminf_{\Tiny\begin{array}{c} s,t\to{x_0}\\ s<{x_0}<t\end{array}}|k_u(s,t)|=\infty.
\end{equation}
Indeed, if \eqref{Eq-Proof-of-(a)-Equat-1-bis} is false, i.e.,  
$$
\alpha:=\liminf_{\Tiny\begin{array}{c} s,t\to{x_0}\\ s<{x_0}<t\end{array}}|k_u(s,t)|<\infty,
$$
then there exist two sequences $\{s_j\}_{j\geq 1}$ and $\{t_j\}_{j\geq 1}$ with $s_j<x_0<t_j$ such that:
\begin{eqnarray}
&& \lim_{j\to\infty} s_j=\lim_{j\to\infty} t_j=x_0; \label{Lemma-MRT-L1-EqUatION-1-bis-bis}\\
&& \lim_{j\to\infty} |k_u(s_j,t_j)|=\alpha.\label{Lemma-MRT-L1-EqUatION-2-bis-bis}
\end{eqnarray}
By \eqref{Lemma-MRT-L1-EqUatION-2-bis-bis} there exists $M>0$ such that $|k_u(s_j,t_j)|\leq M$ for all $j\geq 1$, and so from Bolzano-Weierstrass's theorem it follows that there exist $d\in[-M,M]$ and two subsequences $\{s_{j_n}\}_{n\geq 1}$ and $\{t_{j_n}\}_{n\geq 1}$ of $\{s_j\}_{j\geq 1}$ and $\{t_j\}_{j\geq 1}$ respectively such that
$$
\lim_{n\to\infty} k_u(s_{j_n},t_{j_n})=d.
$$
But, taking \eqref{Lemma-MRT-L1-EqUatION-1-bis-bis} into account, we also have ($s_{j_n}<x_0<t_{j_n}$ for all $n\geq 1$ and)
$$
\lim_{n\to\infty} s_{j_n}=\lim_{n\to\infty} t_{j_n}=x_0,
$$
hence, using Lemma \ref{Lemma-MRT-L1}, we can assert that $u$ is differentiable at $x_0$ (and $u^\prime(x_0)=d)$, i.e., $x_0\in\Omega_u$, which gives a contradiction. Thus \eqref{Eq-Proof-of-(a)-Equat-1-bis} is true and so
$$
\lim_{\Tiny\begin{array}{c} s,t\to{x_0}\\ s<{x_0}<t\end{array}}|k_u(s,t)|=\infty,
$$
which implies that
$$
\lim_{\Tiny\begin{array}{c} s,t\to{x_0}\\ s<{x_0}<t\end{array}}k_u(s,t)=-\infty\quad\hbox{ or } \lim_{\Tiny\begin{array}{c} s,t\to{x_0}\\ s<{x_0}<t\end{array}}k_u(s,t)=\infty,
$$
i.e., $u^\prime(x_0)\in\{-\infty,\infty\}$. \endproof

\smallskip

\paragraph{\bf Proof of (d)} Let $x_0\in[a,b]$ be such that 
\begin{equation}\label{Assumption-Proof-OF-(d)}
u^\prime(x_0)=\infty. 
\end{equation}
We have to prove that
\begin{equation}\label{Equation-Proof-Of-(d)-Eq1}
\lim_{x\to x_0}u^\prime(x)=\infty.
\end{equation}
Assume that \eqref{Equation-Proof-Of-(d)-Eq1} is false. Then, there exist $M>0$ and a sequence $\{x_n\}_{n\geq 1}$ with, without loss of generality, $x_n<x_0$ such that: 
\begin{eqnarray}
&&\lim_{n\to\infty} x_n=x_0;\label{Equation-Proof-Of-(d)-Eq2}\\
&&u^\prime(x_n)<M\hbox{ for all }n\geq 1.\label{Equation-Proof-Of-(d)-Eq3}
\end{eqnarray}
From \eqref{Equation-Proof-Of-(d)-Eq3} we can assert that for every $n\geq 1$ there exists a neighborhood $V_{x_n}$ of $x_n$ such that
\begin{equation}\label{Equation-Proof-Of-(d)-Eq4}
k_u(x_n,y)<M
\end{equation}
for all $y\in V_{x_n}\cap]x_n,x_0]$. Taking \eqref{Equation-Proof-Of-(d)-Eq2} into account, since $u^\prime(x_0)=\infty$ we have
$$
\lim_{n\to\infty}k_u(x_n,x_0)=\infty,
$$
and so
\begin{equation}\label{Equation-Proof-Of-(d)-Eq5}
k_u(x_n,x_0)>M+1
\end{equation}
for all $n\geq n_0$ with $n_0\geq 1$ sufficiently large. On the other hand, as $u$ is continuous at $x_0$, for each $n\geq 1$ we have
\begin{equation}\label{Equation-Proof-Of-(d)-Eq6}
\lim_{\Tiny\begin{array}{c} y\to{x_0}\\ x_n<y<{x_0}\end{array}}k_u(x_n,y)=k_u(x_n,x_0).
\end{equation}
Fix any $n\geq n_0$. From \eqref{Equation-Proof-Of-(d)-Eq5} and \eqref{Equation-Proof-Of-(d)-Eq6} we deduce that  there exists a neighborhood $V_{x_0}$ of $x_0$ such that
\begin{equation}\label{Equation-Proof-Of-(d)-Eq7}
k_u(x_n,y)>M
\end{equation}
for all $y\in V_{x_0}\cap]x_n,x_0]$. Then $A_n\cap B_n=\emptyset$ with:
\begin{enumerate}
\item[$\bullet$] $A_n:=\big\{y\in]x_n,x_0]:\eqref{Equation-Proof-Of-(d)-Eq4} \hbox{ holds}\big\}$;
\item[$\bullet$] $B_n:=\big\{y\in]x_n,x_0]:\eqref{Equation-Proof-Of-(d)-Eq7} \hbox{ holds}\big\}$.
\end{enumerate} 
But, from the above, $A_n\not=\emptyset$ and  $B_n\not=\emptyset$, and both $A_n$ and $B_n$ are open in $]x_n,x_0]$. As $]x_n,x_0]$ is a connected set we can assert that $]x_n,x_0]\setminus(A_n\cup B_n)\not=\emptyset$. Consequently, for each $n\geq n_0$ there exists $y_n\in]x_n,x_0]$ such that
\begin{equation}\label{Equation-Proof-Of-(d)-Eq8}
k_u(x_n,y_n)=M\hbox{, i.e., }\big|k_u(x_n,y_n)\big|=M.
\end{equation}
Moreover, by \eqref{Equation-Proof-Of-(d)-Eq2} we see that $\lim_{n\to\infty}y_n=x_0$ and so
\begin{equation}\label{Equation-Proof-Of-(d)-Eq9}
\lim_{n\to\infty}|x_n-y_n|=0.
\end{equation}
(Thus, without loss of generality we can assume that $|x_n-y_n|\leq{\delta_0\over\exp}$ for all $n\geq 1$.) But, for any $n\geq n_0$, from Lemma \ref{Main-Lemma}, see \eqref{Regu-Fond-Inequality}, we have
$$
\big|k_u(x_n,y)-k_u(x_n,y_n)\big|\leq\delta\big(|k_u(x_n,y_n)|,|x_n-y_n|\big)
$$
for all $y\in]x_n,y_n]$ (where $\delta$ is given by Lemma \ref{Main-Lemma}), hence
$$
\big|k_u(x_n,y)-k_u(x_n,y_n)\big|\leq\delta\big(M,|x_n-y_n|\big)
$$
by using \eqref{Equation-Proof-Of-(d)-Eq8}. Letting $y\to x_n$ it follows that
$$
\big|u^\prime(x_n)-k_u(x_n,y_n)\big|\leq\delta\big(M,|x_n-y_n|\big)
$$
for all $n\geq n_0$. Using  \eqref{Equation-Proof-Of-(d)-Eq9} and taking \eqref{Fundamental-Limit} into account, there exists $n_1\geq n_0$ such that
$$
\big|u^\prime(x_n)-k_u(x_n,y_n)\big|< 1\hbox{ for all }n\geq n_1.
$$
Thus
\begin{equation}\label{Pre-Final-Eq-Proof-of-(d)}
\big|u^\prime(x_n)\big|< M+1\hbox{ for all }n\geq n_1.
\end{equation}
Let $\widehat{\delta}(M+1,1)>0$ be given by Lemma \ref{Lemma-MRT-L2} (with $C=M+1$ and $\eta=1$). Then, we have
\begin{equation}\label{Final-Eq-Proof-of-(d)}
\Big[|u^\prime(\widebar{x})|<M+1\hbox{ and }|x-\widebar{x}|<\widehat{\delta}(M+1,1)\Big]\then |u^\prime(x)-u^\prime(\widebar{x})|\leq1.
\end{equation}
By \eqref{Equation-Proof-Of-(d)-Eq2} there exists $n_2\geq n_1$ such that $|x_0-x_{n_2}|<\widehat{\delta}(M+1,1)$. Hence, taking \eqref{Pre-Final-Eq-Proof-of-(d)} into account and  using \eqref{Final-Eq-Proof-of-(d)} (with $x=x_0$ and $\widebar{x}=x_{n_2}$), we obtain  $|u^\prime(x_0)-u^\prime(x_{n_2})|\leq1$. Consequently, using again \eqref{Pre-Final-Eq-Proof-of-(d)}, i.e., $|u^\prime(x_{n_2})|<M+1$, we deduce that
$$
\big|u^\prime(x_0)\big|\leq \big|u^\prime(x_0)-u^\prime(x_{n_2})\big|+\big|u^\prime(x_{n_2})\big|<M+2,
$$
which contradicts \eqref{Assumption-Proof-OF-(d)}. (Similarly, we can prove that if $u^\prime(x_0)=-\infty$ then $\lim_{x\to x_0}u^\prime(x)=-\infty$.) \endproof

\bigskip

\paragraph{\bf Acknowledgement} I gratefully acknowledge  M. A. Sychev for introducing me to the subject of regularity of one dimensional variational obstacle problems, and for his many comments during the preparation of this paper.


\begin{thebibliography}{BGH98}

\bibitem[BGH98]{buttazzo-Giaquinta-hildebrandt98}
G.~Buttazzo, M.~Giaquinta, and S.~Hildebrandt.
\newblock {\em One-dimensional variational problems}, volume~15 of {\em Oxford
  Lecture Series in Mathematics and its Applications}.
\newblock The Clarendon Press, Oxford University Press, New York, 1998.
\newblock An introduction.

\bibitem[GP11]{gratwick-preiss11}
R.~Gratwick and D.~Preiss.
\newblock A one-dimensional variational problem with continuous {L}agrangian
  and singular minimizer.
\newblock {\em Arch. Ration. Mech. Anal.}, 202(1):177--211, 2011.

\bibitem[GST16]{GST16}
R.~Gratwick, M.~A. Sychev, and A.~S. Tersenov,
\newblock Regularity and singularity phenomena for one-dimensional variational problems with singular ellipticity.
\newblock {\em Pure Appl. Funct. Anal.}, 1(3):397--416, 2016.

\bibitem[Res94]{Reshetnyak-book82}
Yu.~G. Reshetnyak.
\newblock {\em Stability theorems in geometry and analysis}, volume 304 of {\em
  Mathematics and its Applications}.
\newblock Kluwer Academic Publishers Group, Dordrecht, 1994.
\newblock Translated from the 1982 Russian original by N. S. Dairbekov and V.
  N. Dyatlov, and revised by the author, Translation edited and with a foreword
  by S. S. Kutateladze.

\bibitem[Syc89]{sychev89}
M.~A. Sychev.
\newblock On the regularity of solutions of a one-dimensional variational
  problem.
\newblock In {\em Proceedings of the {XXVII} {A}ll-{U}nion {S}cientific
  {S}tudent {C}onference ``{T}he {S}tudent and {S}cientific-{T}echnological
  {P}rogress'' ({N}ovosibirsk, 1989)}, pages 60--65. Novosibirsk. Gos. Univ.,
  Novosibirsk, 1989.

\bibitem[Syc91]{sychev91}
M.~A. Sychev.
\newblock Regularity of solutions of some variational problems.
\newblock {\em Dokl. Akad. Nauk SSSR}, 316(6):1326--1330, 1991.

\bibitem[Syc92]{sychev92}
M.~A. Sychev.
\newblock On the regularity of solutions of variational problems.
\newblock {\em Mat. Sb.}, 183(4):118--142, 1992.

\bibitem[Syc11]{sychev11}
M.~A. Sychev.
\newblock Another theorem of classical solvability `in small' for
  one-dimensional variational problems.
\newblock {\em Arch. Ration. Mech. Anal.}, 202(1):269--294, 2011.

\end{thebibliography}
\end{document}